\newcommand{\ie}{{i.\,e.,\ }}
\newcommand{\eg}{{e.\,g.,\ }}
\newcommand{\PAR}[1]{{ \left(  {#1} \right)  }}
\newcommand{\BRC}[1]{{ \left\{ {#1} \right\} }}
\newcommand{\ANG}[1]{{ \left<  {#1} \right>  }}
\newcommand{\RA}{\rightarrow}
\newcommand{\MT}{\mapsto}
\renewcommand{\~}{\cong}
\newcommand{\I}{{\mathrm{i}}}
\newcommand{\Ga}{\alpha}
\newcommand{\Gz}{\zeta}
\newcommand{\Gt}{\vartheta}
\newcommand{\Gl}{\lambda}
\newcommand{\Gr}{\varrho}
\newcommand{\Gs}{\sigma}
\newcommand{\Gf}{\varphi}
\newcommand{\Go}{\omega}
\newcommand{\GG}{\Gamma} 
\newcommand{\GL}{\Lambda}
\newcommand{\GS}{\Sigma}
\newcommand{\Fg}{{\mathfrak{g}}}
\newcommand{\Ft}{{\mathfrak{t}}}
\newcommand{\Fu}{{\mathfrak{u}}}
\newcommand{\R}{\mathbb{R}}
\newcommand{\C}{\mathbb{C}}
\newcommand{\Z}{\mathbb{Z}}
\newcommand{\N}{\mathbb{N}}
\newcommand{\PS}{\mathbb{P}}
\newcommand{\orb}{{\mathcal{O}}}
\newcommand{\Cinf}{C^\infty}
\newcommand{\UE}{{\mathcal{U}}}
\newcommand{\Tr}{\mathrm{T}}
\newcommand{\IO}[1]{\frac{\I}{#1}}
\newcommand{\OH}{\frac{1}{2}}
\newcommand{\COV}{\nabla}
\newcommand{\dd}{\mathrm{d}}
\newcommand{\df}{\dd f}
\renewcommand{\dh}{\dd h}
\newcommand{\dPh}{\dd \Phi}
\newcommand{\TF}[2]{{\frac{\dd {#1}}{\dd {#2}}}}
\newcommand{\TFpt}[2]{ { {\left. {\TF{}{#1}} \right|}_{#2} } }
\newcommand{\SSE}{\subseteq}
\renewcommand{\Im}{\mathrm{Im}}
\DeclareMathOperator{\ID}{id}
\DeclareMathOperator{\Ad}{Ad}
\DeclareMathOperator{\END}{End}
\DeclareMathOperator{\HOM}{Hom}
\DeclareMathOperator{\Mat}{Mat}
\DeclareMathOperator{\tr}{tr}
\DeclareMathOperator{\Hol}{Hol}
\DeclareMathOperator*{\complsum}{\widehat{\bigoplus}}
\def\haken{\mathbin{\hbox to 6pt{%
                 \vrule height0.4pt width5pt depth0pt
                 \kern-.4pt
                 \vrule height6pt width0.4pt depth0pt\hss}}}
\theoremstyle{definition}
\newtheorem{Def}{Definition}[section]
\newtheorem*{Def*}{Definition}
\newtheorem*{DefF*}{D\'efinition}
\newtheorem*{Exm*}{Example}
\newtheorem*{ExmF*}{Exemple}
\newtheorem*{ExmD*}{Beispiel}
\newtheorem{Rem}[Def]{Remark}
\newtheorem*{Rem*}{Remark}
\newtheorem*{RemF*}{Remarque}
\newtheorem*{RemD*}{Bemerkung}
\newtheorem*{Not*}{Notation}
\theoremstyle{plain}
\newtheorem{Thm}[Def]{Theorem}
\newtheorem*{Thm*}{Theorem}
\newtheorem*{ThmF*}{Th\'eor\`eme}
\newtheorem{Lemma}[Def]{Lemma}
\newtheorem*{Lemma*}{Lemma}
\newtheorem*{LemmaF*}{Lemme}
\newtheorem{Prop}[Def]{Proposition}
\newtheorem*{Prop*}{Proposition}
\newtheorem*{Coro*}{Corollary}
\newtheorem*{CoroF*}{Corollaire}
\newtheorem*{CoroD*}{Korollar}
\newtheorem*{Ques*}{Question}
\numberwithin{equation}{section}
\title{On the Geometry and Quantization\\
	of Symplectic Howe Pairs}
\author{
{\bf Carsten Balleier\footnote{supported by the German Research Foundation (DFG) and the Universit\'e 
franco-allemande (DFH-UFA) via the IRTG 1133 ``Geometry and Analysis of Symmetries"} 
}\\
{\bf Tilmann Wurzbacher
}\\
     {\small Laboratoire de Math\'ematiques et Applications de Metz}\\
             {\small Universit\'e Paul Verlaine-Metz et C.\,N.\,R.\,S.}\\
                 {\small Ile du Saulcy, F-57045 Metz, France}\\
                 {\small \tt{wurzbacher@math.univ-metz.fr}  }  
  }
\begin{document}

\date{October 5, 2009}

\maketitle

\hskip0.4cm {\bf MSC:} Primary 53D20 $\cdot$ 53D50; Secondary 17B08 $\cdot$ 32Q15 $\cdot$ 57S15

\abstract
We study the orbit structure and the geometric quantization of a pair
of mutually commuting hamiltonian actions on a symplectic manifold.
If the pair of actions fulfils a {\em symplectic Howe condition},
we show that there is a canonical correspondence between the
orbit spaces of the respective moment images. Furthermore, we show that
reduced spaces with respect to the action of one group are symplectomorphic
to coadjoint orbits of the other group. In the K\"ahler case we show
that the linear representation of a pair of compact Lie groups
on the geometric quantization of the manifold is then equipped with a 
representation-theoretic Howe duality.

\section{Introduction}
In representation theory, Howe duality (\cite{Howe-remarks}) is well-known.
In the model situation of a product $G_1 \times G_2$ of two compact
connected Lie groups linearly represented on a finite dimensional complex
vector space $U$ ($\Gr : G_1 \times G_2 \RA GL(U)$), we say that the
representation satisfies the {\em Howe condition} or is equipped with a {\em
Howe duality} if there is a subset $\mathcal{D}$ of $\widehat{G_1}$, the set
of equivalence classes of irreducible complex representations of $G_1$, and an
injective map $\GL : \mathcal{D} \RA \widehat{G_2}$ such that
\[
	(*) \qquad\qquad\qquad
	U \~ \bigoplus_{\Ga \in \mathcal{D}} V_\Ga \otimes W_{\GL(\Ga)},
	\qquad\phantom{0}
\]
where $V_\Ga$ represents a class $\Ga$ in $\mathcal{D}$ and $W_{\GL(\Ga)}$
represents the class $\GL(\Ga)$. Denoting for $k = 1,2$ the restriction
of the representation $\Gr$ to $G_k$ by $\Gr_k$ (as well as the
corresponding Lie algebra representation), condition $(*)$ is equivalent
to
\[
	(**) \qquad\qquad
	Z_{\END(U)}(\Gr_i(\UE \Fg_i)) = \Gr_j(\UE \Fg_j)
	\qquad \textnormal{for } i\ne j,
\]
where $\UE \Fg$ is the universal enveloping algebra (over $\C$) of a Lie
algebra $\Fg$, and for a set $S \SSE \END(U)$, $Z_{\END(U)}(S)$ is the centralizer
of $S$ in the endomorphisms of $U$. This equivalence is easily shown using
standard facts from representation theory, as found, \eg in \cite{GoodWall}.
Obviously, condition $(*)$ can be formulated for representations on a
complex vector space $U$ having countable dimension and -upon passing to an
appropriate completion on the right hand side- for representations on
Fr\'echet or more general topological vector spaces.

In the spirit of quantization/dequantization of physical and mathematical
structures (going back to the correspondence principle of Niels Bohr, 
see \cite{Bohr-coll3}), attempts to find analogous
notions in symplectic geometry have been made. This concerns analogues of
commutants ($G_\mathscr{V}$ in \cite{KKS-Calo}, polar actions in
\cite{OR-moment}) and dual pair notions (as in \cite{Wein-locstr}),
as well as the study of the orbit structure in particular cases like 
in, \eg \cite{Adams-orb-dual}. In this article, we study the setting of
commuting hamiltonian (proper) actions of two Lie groups on a symplectic
manifold. We define in this situation a {\em symplectic Howe condition},
naturally corresponding to the above condition $(**)$, namely
\[
	Z_{\Cinf(M)}(\Phi_i^*\Cinf(\Fg_i^*)) = \Phi_j^*\Cinf(\Fg_j^*)
		\qquad \textnormal{for } i \ne j.
\]
Here and in the sequel, the Lie algebra of a Lie group $G$ will always be
denoted by $\Fg$ and, for $A \SSE \Cinf(M)$,
$Z_{\Cinf(M)}(A) = \{f \in \Cinf(M) \:|\: \{f, a\} = 0 \:\forall a\in A\}$.\\

We analyse thoroughly its consequences on the orbit structure, obtaining
results that refine those of \cite{KKS-Calo}. We then prove an orbit
correspondence and show the conservation of integrality under the
correspondence. Furthermore we show that the Marsden-Weinstein reduced
spaces of the action of one group are equivariantly
symplectomorphic to coadjoint orbits in the moment image of the other group
action. To this setting we finally apply geometric quantization and obtain in 
the K\"ahler case a decomposition of the quantization of the initial
symplectic manifold as in $(*)$. That is, given a pair of commuting actions
by holomorphic transformation groups on a K\"ahler manifold satisfying the
symplectic Howe condition, the linear action of the pair on the holomorphic
quantization (the holomorphic sections of the quantizing holomorphic line
bundle) satisfies the representation-theoretic Howe condition. In the last
section we illustrate the results with some simple but instructive examples.

\section{Properties of Commuting Hamiltonian Actions}
The setting throughout this article is a smooth connected symplectic
manifold $(M, \Go)$ on which two smooth actions of the Lie groups $G_1$ and
$G_2$ are given.

Assume these actions commute and are hamiltonian such that each action
admits an equivariant moment map $\Phi_i : M \RA \Fg_i^*$ ($i = 1,2$), \ie
they satisfy $\dPh_i^\xi = \Go(\tau_i(\xi), \cdot)$ and
$\Phi_i(g\cdot z) = \Ad^*(g) \Phi_i(z) \:\forall z\in M, g \in G_i$.
Here, for any $\xi \in \Fg_i$, we denote by $\tau_i(\xi)$ the fundamental
vector field on $M$ associated to $\xi$ and by $\Phi_i^\xi$ the component
of the $i$-moment map in direction $\xi$. We first notice the following
easy lemma.

\begin{Lemma}
\label{LEM:invtPhi}
Let $(M, \Go)$ be a symplectic manifold. Let the Lie groups $G_1$ and $G_2$
act symplectically on $M\!$, admitting equivariant moment maps\/ $\Phi_1$ and
$\Phi_2$. Assume these actions commute. Then the product group $G_1 \times
G_2$ acts symplectically on $M$ with an equivariant moment map, and notably
for any $\xi \in \Fg_1$ and any
$\eta \in \Fg_2$ the Poisson bracket of the moment components vanishes, \ie
\[
	\BRC{\Phi_1^\xi, \Phi_2^\eta} = 0.
\]
\end{Lemma}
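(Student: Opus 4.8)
The plan is to produce the moment map of the product action explicitly and then read the bracket identity off it. First, since the two actions commute, $(g_1,g_2)\cdot z:=g_1\cdot(g_2\cdot z)$ is a well-defined smooth action of $G_1\times G_2$ on $M$, and it is symplectic because each factor is. Identifying the Lie algebra of $G_1\times G_2$ with $\Fg_1\oplus\Fg_2$ and its dual with $\Fg_1^*\oplus\Fg_2^*$, the fundamental vector field of $(\xi,\eta)$ is $\tau_1(\xi)+\tau_2(\eta)$, so the natural candidate for a moment map is $\Phi:=(\Phi_1,\Phi_2)\colon M\RA\Fg_1^*\oplus\Fg_2^*$, whose component along $(\xi,\eta)$ is $\Phi_1^\xi+\Phi_2^\eta$. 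From
\[
	\dd(\Phi_1^\xi+\Phi_2^\eta)=\iota_{\tau_1(\xi)}\Go+\iota_{\tau_2(\eta)}\Go=\iota_{\tau_1(\xi)+\tau_2(\eta)}\Go
\]
we see that $\Phi$ is a moment map for the product action; it remains to check its $\Ad^*$-equivariance.

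Because the coadjoint representation of $G_1\times G_2$ on $\Fg_1^*\oplus\Fg_2^*$ is block diagonal, equivariance of $\Phi$ is equivalent to $\Phi_1((g_1,g_2)\cdot z)=\Ad^*(g_1)\Phi_1(z)$ together with the symmetric identity for $\Phi_2$; and, given the assumed $G_1$-equivariance of $\Phi_1$, this amounts to $\Phi_1$ being $G_2$-invariant and $\Phi_2$ being $G_1$-invariant. For the first, fix $g_2\in G_2$ and note that $\Phi_1\circ g_2$ is again a moment map for the $G_1$-action: as $g_2$ commutes with the $G_1$-action it preserves every fundamental vector field $\tau_1(\xi)$, whence $\dd((\Phi_1\circ g_2)^\xi)=g_2^*(\iota_{\tau_1(\xi)}\Go)=\iota_{\tau_1(\xi)}\Go=\dd\Phi_1^\xi$, using also $g_2^*\Go=\Go$. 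Hence $\Phi_1\circ g_2-\Phi_1$ is locally constant, so a constant $c(g_2)\in\Fg_1^*$ as $M$ is connected, and $c\colon G_2\RA(\Fg_1^*,+)$ is a continuous group homomorphism.

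The main obstacle is to see that $c$ — and, symmetrically, its counterpart $G_1\RA(\Fg_2^*,+)$ — vanishes; this is the one step where the nature of the groups enters. It is automatic when one of $G_1,G_2$ is compact, since then the image of the homomorphism is a compact subgroup of a vector group, hence trivial; likewise when one of the Lie algebras equals its derived algebra. Granting this, $\Phi_1$ is $G_2$-invariant and $\Phi_2$ is $G_1$-invariant, so $\Phi$ is an equivariant moment map for $G_1\times G_2$; moreover, for $\xi\in\Fg_1$ and $\eta\in\Fg_2$,
\[
	\BRC{\Phi_1^\xi,\Phi_2^\eta}=\Go(\tau_1(\xi),\tau_2(\eta))=\Lie_{\tau_2(\eta)}\Phi_1^\xi=0,
\]
the last equality being the infinitesimal form of the $G_2$-invariance of $\Phi_1$. (Even without the invariance, a short Cartan-calculus computation — using that the actions commute and are symplectic — gives $\dd\BRC{\Phi_1^\xi,\Phi_2^\eta}=0$, so the bracket is already constant on the connected $M$; the vanishing of this constant is once more the point above.)
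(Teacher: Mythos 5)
Your argument is correct in every step it actually carries out, and its architecture (the product action, $\Phi=(\Phi_1,\Phi_2)$ as candidate moment map, reduction of equivariance to cross-invariance, the locally constant difference $c(g_2)=\Phi_1\circ g_2-\Phi_1$ and the observation that $c$ is a continuous homomorphism into the vector group $(\Fg_1^*,+)$) is the right one; since the paper's own proof consists of the single word ``Obvious'', there is no method to compare against. The essential point is that the step you explicitly leave open --- the vanishing of $c$ --- is a genuine obstruction and \emph{cannot} be closed from the stated hypotheses: as written, for arbitrary Lie groups, the lemma is false. Take $M=\R^2$ with $\Go=\dd x\wedge\dd y$ and let $G_1=G_2=\R$ act by translation in $x$ and in $y$ respectively. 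The actions commute, and $\Phi_1=y$, $\Phi_2=-x$ are moment maps that are equivariant (here: invariant, the groups being abelian) for their own actions; yet $\BRC{\Phi_1^{\xi},\Phi_2^{\eta}}=\Go(\PT_x,\PT_y)=1$ for $\xi=\eta=1$, and no constant shift of $(\Phi_1,\Phi_2)$ is invariant under the product $\R^2$-action, so the product action admits no equivariant moment map at all. So your homomorphism $c$ really can be nontrivial, and your hesitation is justified.

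Concerning the scope of your sufficient conditions: the constant $\BRC{\Phi_1^{\xi},\Phi_2^{\eta}}$ defines a bilinear pairing $\Fg_1\times\Fg_2\RA\R$ which, by the Jacobi identity together with the infinitesimal equivariance $\BRC{\Phi_i^{\xi},\Phi_i^{\xi'}}=\pm\Phi_i^{[\xi,\xi']}$, annihilates $[\Fg_1,\Fg_1]\times\Fg_2$ and $\Fg_1\times[\Fg_2,\Fg_2]$; this recovers your perfect-Lie-algebra criterion, and your direct argument (a continuous homomorphism from a compact group into a vector group is trivial) settles the compact case, including the central torus. Every later use of the lemma in the paper concerns either compact connected groups (Sections 3--5 and Examples 5.1 and 5.3) or the pair $(G,G)$ on $T^*G$, where the vanishing of the bracket can be read off the explicit formulas for $\Phi_1$ and $\Phi_2$; so the gap is harmless downstream. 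But it is a gap in the lemma's statement rather than in your proof: the correct fix is to add a hypothesis (e.g.\ $G_1$, $G_2$ compact, or more generally that the constants $\Go(\tau_1(\xi),\tau_2(\eta))$ vanish), not to claim the general case. With the counterexample above recorded and that hypothesis made explicit, your proposal is a complete and correct treatment --- and a more careful one than the paper's.
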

\begin{proof} Obvious.
\vskip-5mm\end{proof}

In other words, the moment map of the first action is constant along the
connected components of the orbits of the second action, and vice versa. For
$G_1$ and $G_2$ (or, at least, their orbits) connected, this can be
rephrased as
\[
	\Phi_i^* \Cinf(\Fg_i^*) \SSE \Cinf(M)^{G_j}
		\qquad \textnormal{for } i \ne j.
\]

Here we observe the inclusion of the collective functions for the first
action in the set of invariants for the second action, and vice versa.
Let us now observe the following useful -and most certainly folkloristic- result.
\begin{Lemma}
\label{LEM:centr-invt}
Let $G$ be a connected Lie group and $(M, \Go)$ a symplectic manifold. If
$G$ acts symplectically with an equivariant moment map $\Phi$ on $M$, then
\[
	Z_{\Cinf(M)}(\Phi^*\Cinf(\Fg^*)) = \Cinf(M)^G.
\]
\end{Lemma}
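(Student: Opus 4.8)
The plan is to prove the two inclusions separately, the key point being that it suffices to test against the \emph{linear} functions on $\Fg^*$. Recall that any $\xi\in\Fg$, viewed as a linear functional on $\Fg^*$, lies in $\Cinf(\Fg^*)$, so $\Phi^\xi=\Phi^*\xi\in\Phi^*\Cinf(\Fg^*)$; and the defining relation $\dd\Phi^\xi=\Go(\tau(\xi),\cdot)$ says that $\tau(\xi)$ is the hamiltonian vector field of $\Phi^\xi$, so that for every $f\in\Cinf(M)$ one has $\BRC{f,\Phi^\xi}=\pm\,\Lie_{\tau(\xi)}f$ (the sign depending on conventions and being irrelevant below).

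For ``$\SSE$'', assume $f$ Poisson-commutes with all of $\Phi^*\Cinf(\Fg^*)$. In particular $\BRC{f,\Phi^\xi}=0$, hence $\Lie_{\tau(\xi)}f=0$, for every $\xi\in\Fg$. Thus $f$ is constant along the integral curves $t\mapsto\exp(t\xi)\cdot z$ of the fundamental vector fields; since $G$ is connected it is generated by the one-parameter subgroups $\exp(t\xi)$, and so $f$ is $G$-invariant.

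For ``$\SPSE$'', let $f\in\Cinf(M)^G$. Differentiating $f(\exp(t\xi)\cdot z)=f(z)$ at $t=0$ gives $\Lie_{\tau(\xi)}f=0$, i.e.\ $\BRC{f,\Phi^\xi}=0$, for all $\xi\in\Fg$. It remains to pass from linear functions to an arbitrary $h\in\Cinf(\Fg^*)$, and this is the only step that needs an argument. Choose a basis $(\xi_a)$ of $\Fg$ with dual linear coordinates $(x_a)$ on $\Fg^*$, so that $\Phi^{\xi_a}=x_a\circ\Phi$ and $\Phi^*h=h\circ\Phi$. Since $\BRC{f,\cdot}$ is a derivation of $\Cinf(M)$, hence a vector field, it obeys the chain rule, which gives
\[
	\BRC{f,\Phi^*h}=\sum_a\PAR{\PF{h}{x_a}\circ\Phi}\,\BRC{f,\Phi^{\xi_a}}=0.
\]
Hence $f\in Z_{\Cinf(M)}(\Phi^*\Cinf(\Fg^*))$, which completes the proof.

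I do not anticipate a genuine obstacle: the statement amounts to the fact that, as far as Poisson-commuting is concerned, the collective functions $\Phi^*\Cinf(\Fg^*)$ are generated by the components $\Phi^\xi$, combined with the standard equivalence between infinitesimal invariance and invariance under a connected group. The only point deserving care is the chain-rule identity in the display above, which is precisely what makes ``generated by the components'' rigorous; writing it out in a local basis of $\Fg$ as indicated settles it.
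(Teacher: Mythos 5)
Your proof is correct and follows essentially the same route as the paper: both directions rest on reducing the centralizer condition to the linear components $\Phi^\xi$ and invoking connectedness of $G$. The only cosmetic difference is that you carry out the chain-rule step $\BRC{f,\Phi^*h}=\sum_a\PAR{\PF{h}{x_a}\circ\Phi}\BRC{f,\Phi^{\xi_a}}$ in a basis of $\Fg$, whereas the paper phrases the same computation invariantly as $\{h\circ\Phi,f\}(z)=\dh_{\Phi(z)}(T_z\Phi(X_{f|z}))$ with $T_z\Phi(X_{f|z})=0$.
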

\begin{proof}
Note that for $f \in \Cinf(M)$ and $\xi \in \Fg$, one has that $\xi^M(f) =
\{f, \Phi^\xi\}$ (where $\xi^M = \tau(\xi)$ denotes the fundamental vector
field on $M$ associated to $\xi \in \Fg$), and, furthermore, if $z \in M$, one
has $\{f, \Phi^\xi\}(z) = -\ANG{\xi, T_z\Phi(X_{f|z})}$ where
$\ANG{\cdot, \cdot}$ is the natural pairing between $\Fg$ and $\Fg^*$ and
$\Go(X_f, \cdot) = \df$, \ie $X_f$ is
the hamiltonian vector field associated to $f$.

Given now $f \in \Cinf(M)^G$ one has that for all $z \in M$,
$T_z\Phi(X_{f|z}) = 0$. If $h \in \Cinf(\Fg^*)$ and $z \in M$, we have
\[
	\{h \circ \Phi, f\}(z) = \dd(h \circ \Phi)_z(X_{f|z})
		= \dh_{\Phi(z)}(T_z\Phi(X_{f|z})) = 0.
\]

On the other hand, if $f \in Z_{\Cinf(M)}(\Phi^*\Cinf(\Fg^*))$ and $\xi \in
\Fg$, then $\xi^M(f) = \{f, \Phi^\xi\} = 0$, and thus by the connectedness
of $G$, $f \in \Cinf(M)^G$.
\end{proof}

\begin{Rem}
More subtle relations between centralizers of pullbacks via $\Phi$ and
invariant functions are shown in \cite{KL-centr} in the case that $G$ is
compact (often extending, in fact, to proper actions).
\end{Rem}

With the two preceding lemmata we immediately get that
\[
	\Phi_i^* \Cinf(\Fg_i^*) \SSE Z_{\Cinf(M)}(\Phi_j^* \Cinf(\Fg_j^*))
		\qquad \textnormal{for } i \ne j.
\]
Equality of the respective sets is obviously the classical analogue of
$(**)$ upon interpreting $\Phi^*\Cinf(\Fg^*)$ as the ``classical collective
observables (w.\,r.\,t.\ to a hamiltonian $G$-action on $M$)'' and $\Gr(\UE
\Fg)$ as the ``quantum collective observables (w.\,r.\,t.\ to a linear
$G$-representation)''.
Therefore, it is natural to study the following situation.

\begin{Def}
We say that two commuting hamiltonian actions satisfy the
{\em symplectic Howe condition} or form a {\em symplectic Howe pair} if
\[
	Z_{\Cinf(M)}(\Phi_i^*\Cinf(\Fg_i^*)) = \Phi_j^*\Cinf(\Fg_j^*)
		\qquad \textnormal{for } i \ne j
\]
is satisfied.
\end{Def}

\begin{Rem}
The above condition is clearly close to the notions of
dual pairs presented in Ch.\,11 of \cite{OR-moment}. However, we do not require
the moment maps to be submersions nor surjections when dealing with the
symplectic Howe condition.
\end{Rem}

Assuming further the properness of both actions, the symplectic Howe condition
has the following important consequence for the orbit structure of these
commuting actions.

\begin{Prop}
\label{PR:mmlevels}
Let commuting hamiltonian proper actions of the connected Lie groups $G_1$
and $G_2$  with equivariant moment maps $\Phi_1$ and $\Phi_2$
be given on the symplectic manifold $(M, \Go)$.
Then the symplectic Howe condition
\[
	Z_{\Cinf(M)}(\Phi_i^*\Cinf(\Fg_i^*)) = \Phi_j^*\Cinf(\Fg_j^*)
		\qquad \textnormal{for } i \ne j
\]
implies that
\[
	\forall z \in M \textnormal{ holds }
		\Phi_i^{-1}(\Phi_i(z)) = G_j \cdot z
		\qquad \textnormal{for } i \ne j,
\]
\ie the levels sets of the moment maps of one action are the orbits of the
other one.
\end{Prop}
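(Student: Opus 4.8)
The plan is to fix $i \ne j$, an arbitrary point $z \in M$, and prove the two inclusions $G_j \cdot z \SSE \Phi_i^{-1}(\Phi_i(z))$ and $\Phi_i^{-1}(\Phi_i(z)) \SSE G_j \cdot z$ separately. The first one does not use the Howe condition: by Lemma~\ref{LEM:invtPhi} we have $\BRC{\Phi_i^\xi, \Phi_j^\eta} = 0$ for all $\xi \in \Fg_i$ and $\eta \in \Fg_j$, and since (with the conventions of the previous page) the hamiltonian vector field of $\Phi_j^\eta$ is exactly the fundamental vector field $\tau_j(\eta)$ of the $G_j$-action, each component $\Phi_i^\xi$ is constant along the flow lines of all the $\tau_j(\eta)$; as $G_j$ is connected, $\Phi_i$ is then constant on every $G_j$-orbit, whence $G_j \cdot z \SSE \Phi_i^{-1}(\Phi_i(z))$. (This is just the inclusion $\Phi_i^*\Cinf(\Fg_i^*) \SSE \Cinf(M)^{G_j}$ already noted after Lemma~\ref{LEM:invtPhi}.)

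For the reverse inclusion I would first translate the symplectic Howe condition into a statement about invariant functions only. Applying Lemma~\ref{LEM:centr-invt} to the connected group $G_j$ gives $Z_{\Cinf(M)}(\Phi_j^*\Cinf(\Fg_j^*)) = \Cinf(M)^{G_j}$, so the hypothesis $Z_{\Cinf(M)}(\Phi_j^*\Cinf(\Fg_j^*)) = \Phi_i^*\Cinf(\Fg_i^*)$ becomes $\Cinf(M)^{G_j} = \Phi_i^*\Cinf(\Fg_i^*)$, \ie a smooth function on $M$ is $G_j$-invariant if and only if it factors through $\Phi_i$. Now take $z' \in \Phi_i^{-1}(\Phi_i(z))$ and argue by contradiction, assuming $z' \notin G_j \cdot z$. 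Using properness of the $G_j$-action I would produce a $G_j$-invariant smooth function $f$ with $f(z) \ne f(z')$; by the reformulated Howe condition $f = h \circ \Phi_i$ for some $h \in \Cinf(\Fg_i^*)$, and hence $f(z) = h(\Phi_i(z)) = h(\Phi_i(z')) = f(z')$, a contradiction. Thus $z' \in G_j \cdot z$, and combined with the first inclusion this finishes the proof.

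The step I expect to be the main obstacle is the one ingredient glossed over above: that for a proper action the algebra of smooth invariant functions separates the orbits. This is precisely where properness is genuinely used (for non-proper actions the orbits need not even be closed), and I would deduce it from the slice theorem for proper actions: around the orbit $G_j \cdot z$ there is a $G_j$-invariant tubular neighbourhood modelled on $G_j \times_H S$, where $H$ is the (compact) stabilizer of $z$ and $S$ a slice, so a bump function on $S$ can be averaged over $H$ and spread out to a $G_j$-invariant smooth function on $M$ supported in that tube; separating two distinct orbits then reduces to separating their images in the Hausdorff orbit space $M/G_j$ by such functions. Alternatively, one may simply cite this separation property from the literature on proper actions, e.g.\ from \cite{OR-moment}.
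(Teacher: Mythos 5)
Your proposal is correct and follows essentially the same route as the paper: the easy inclusion comes from Lemma~\ref{LEM:invtPhi} and connectedness of $G_j$, and the reverse inclusion is obtained by combining Lemma~\ref{LEM:centr-invt} with the Howe condition to get $\Cinf(M)^{G_j} = \Phi_i^*\Cinf(\Fg_i^*)$ and then contradicting the existence of a separating $G_j$-invariant function supplied by properness. The only difference is that you spell out the slice-theorem construction of the separating invariant function, which the paper merely cites as folklore.
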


\begin{proof}
Let $i, j \in \{1, 2\}$ such that $i + j = 3$. Fix $z \in M$ and denote
$\Phi_i^{-1}(\Phi_i(z)) = N_z$.
By Lemma~\ref{LEM:invtPhi} and the connectedness of $G_j$, the $G_j$-action
on $M$ preserves the subset $N_z \SSE M$.

Assume that $N_z \ne G_j \cdot z$.
Then there exists $z' \in N_z$ such that $G_j \cdot z' \cap G_j \cdot z =
\emptyset$.
Since the $G_j$-action is proper, there exists $f \in \Cinf(M)^{G_j}$ such
that $f(z') \ne f(z)$ (separation of orbits by invariant functions follows
easily from the slice theorem for proper actions).

By Lemma~\ref{LEM:centr-invt} and the symplectic Howe condition, we have
\[
	\Cinf(M)^{G_j} = Z_{\Cinf(M)}(\Phi_j^*\Cinf(\Fg_j^*))
			= \Phi_i^*\Cinf(\Fg_i^*),
\]
\ie there is a smooth function $h : \Fg_i^* \RA \R$ such that $f =
h \circ \Phi_i$. Thus we arrive at $f(z') = h(\Phi_i(z')) =
h(\Phi_i(z)) = f(z)$, contradicting our assumption. Consequently,
$\Phi_i^{-1}(\Phi_i(z)) = G_j \cdot z$.
\end{proof}

Knowing the level sets of the moment maps as precisely as above permits to
relate the stabilizers of the actions on $M$ and on the moment images
$\Phi_i(M)$.
\begin{Lemma}
\label{LEM:stab-rels}
Let $G_1$ and $G_2$ be Lie groups and $(M, \Go)$ be a 
symplectic manifold. Let hamiltonian actions of both groups on $M$ be given
which commute, and denote the equivariant moment maps by $\Phi_i :
M \RA \Fg_i^*$. Let $G_{12,z} = \{(g, g') \in G_1 \times G_2 \:|\:
(g, g') \cdot z = z\}$ be the stabilizer of a point $z \in M$ under the
simultaneous action of both groups, write $H_{1,z}$ and $H_{2,z}$ for the
projections of $G_{12,z}$ to the groups $G_1$ and $G_2$.

Then

\renewcommand{\theenumi}{\roman{enumi}}
\renewcommand{\labelenumi}{(\theenumi)}
\begin{enumerate}
\item $H_{1,z}$ is contained in $G_{1,\Phi_1(z)}$, the stabilizer of the
image of $z$ under the coadjoint action (\ie $G_{1,z} \SSE H_{1,z} \SSE
G_{1,\Phi_1(z)}$), and
\item if, furthermore, the level sets of $\Phi_1$ are actually
$G_2$-orbits, then $H_{1,z} = G_{1,\Phi_1(z)}$.
\end{enumerate}
The same statements hold for the indices $1$ and $2$ interchanged.
\end{Lemma}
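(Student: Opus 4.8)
The plan is to deduce (i) directly from the definition of the joint stabilizer together with the equivariance of the moment map of the product action, and then to obtain the one missing inclusion in (ii) from the hypothesis that the fibres of $\Phi_1$ are $G_2$-orbits.

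For (i), the inclusion $G_{1,z} \SSE H_{1,z}$ is immediate: if $g \in G_1$ fixes $z$, then $(g,e) \in G_{12,z}$, so $g$ lies in the projection $H_{1,z}$. For the inclusion $H_{1,z} \SSE G_{1,\Phi_1(z)}$, take $g \in H_{1,z}$ and choose $g' \in G_2$ with $(g,g')\cdot z = z$. By Lemma~\ref{LEM:invtPhi} the combined map $(\Phi_1,\Phi_2)$ is an equivariant moment map for the $G_1 \times G_2$-action; since $G_1 \times G_2$ is a direct product, the coadjoint action on $(\Fg_1 \oplus \Fg_2)^* = \Fg_1^* \oplus \Fg_2^*$ is block-diagonal, so this equivariance says exactly that $\Phi_1((h,h')\cdot w) = \Ad^*(h)\,\Phi_1(w)$ for all $(h,h') \in G_1 \times G_2$ and $w \in M$ (in particular $\Phi_1$ is $G_2$-invariant). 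Applying this to $(g,g')\cdot z = z$ gives $\Ad^*(g)\,\Phi_1(z) = \Phi_1(z)$, \ie $g \in G_{1,\Phi_1(z)}$.

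For (ii), assume in addition that the level sets of $\Phi_1$ are $G_2$-orbits; by (i) it suffices to prove $G_{1,\Phi_1(z)} \SSE H_{1,z}$. Given $g \in G_{1,\Phi_1(z)}$, the equivariance recalled above yields $\Phi_1(g\cdot z) = \Ad^*(g)\,\Phi_1(z) = \Phi_1(z)$, so $g\cdot z \in \Phi_1^{-1}(\Phi_1(z)) = G_2\cdot z$; hence $g\cdot z = g'\cdot z$ for some $g' \in G_2$. Since the two actions commute, $(g,(g')^{-1})\cdot z = (g')^{-1}\cdot(g\cdot z) = (g')^{-1}\cdot(g'\cdot z) = z$, so $(g,(g')^{-1}) \in G_{12,z}$ and therefore $g \in H_{1,z}$. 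The statements with the indices $1$ and $2$ interchanged follow by the same argument, the hypotheses being symmetric in the two factors. The argument is essentially formal; the only step needing care is the correct reading of Lemma~\ref{LEM:invtPhi} — that equivariance of the product moment map amounts to $\Phi_1$ being $G_1$-equivariant and simultaneously $G_2$-invariant (and symmetrically for $\Phi_2$).
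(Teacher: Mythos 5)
Your proof is correct and follows essentially the same route as the paper: part (i) from the $G_1$-equivariance together with the $G_2$-invariance of $\Phi_1$ applied to $(g,g')\cdot z=z$, and part (ii) by using that $g\cdot z$ lies in the $\Phi_1$-fibre of $z$, which is a $G_2$-orbit, to manufacture the required element of $G_{12,z}$. Your explicit justification of the $G_2$-invariance of $\Phi_1$ via the equivariance of the product moment map from Lemma~\ref{LEM:invtPhi} is exactly what the paper uses implicitly.
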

\begin{proof}
\renewcommand{\theenumi}{\roman{enumi}}
\renewcommand{\labelenumi}{(\theenumi)}
\begin{enumerate}
\item Take $g \in H_{1,z}$, \ie there exists $g' \in G_2$ such that
$(g, g') \cdot z = z$. From this one obtains
\[
        \Phi_1(z) = \Phi_1((g, g') \cdot z) =
                g\cdot \Phi_1((e, g') \cdot z) = g \cdot \Phi_1(z),
\]
where $G_1$-equivariance and $G_2$-invariance of $\Phi_1$ have been used.

\item Now take $g \in G_{1,\Phi_1(z)}$, then $\Phi_1(g\cdot z) = \Phi_1(z)$,
\ie both $z$ and $g\cdot z$ lie in a level set of $\Phi_1$, which is by
assumption a $G_2$-orbit. Thus there exists $g' \in G_2$ such that $(g, g')
\cdot z = z$, which was to be shown.
\end{enumerate}

Of course, interchanging both actions does not alter the proof.
\end{proof}

The particular orbit structure that we have described yields now a
correspondence between the orbits in the images of the moment maps $\Phi_1$
and $\Phi_2$.

\begin{Thm}
\label{TH:orbcorr}
Let commuting hamiltonian proper actions of the connected Lie groups $G_1$
and $G_2$ with equivariant moment maps $\Phi_1$ and $\Phi_2$
be given on the symplectic manifold $(M, \Go)$.

If the symplectic Howe condition is satisfied, then:

\renewcommand{\theenumi}{\roman{enumi}}
\renewcommand{\labelenumi}{(\theenumi)}
\begin{enumerate}  
\item There is a bijection $\GL :  {\Phi_1}(M)/G_1 \RA \Phi_2(M)/G_2$, given by
	\[
		\GL(\orb_{\Ga_1}) = \Phi_2(\Phi_1^{-1}(\orb_{\Ga_1})),
	\]
	where $\Ga_1 \in \Phi_1(M)$, and $\orb_{\Ga_1} = \Ad^*(G_1){\Ga_1}$ is
	seen as	an element of $\Phi_1(M)/G_1$. To each pair $(\orb_{\Ga_1},
	\GL(\orb_{\Ga_1}))$ belongs a unique orbit $(G_1 \times G_2) \cdot z$
	in $M$ given by $\Phi_1^{-1}(\orb_{\Ga_1})=\Phi_2^{-1}(\orb_{\Ga_2})$.
	We say that orbits $(G_1 \times G_2) \cdot z$, $\orb_{\Ga_1}$ and
	$\orb_{\Ga_2}$ are in correspondence if $\orb_{\Ga_2} =
	\GL(\orb_{\Ga_1})$ and $(G_1 \times G_2) \cdot z =
	\Phi_1^{-1}(\orb_{\Ga_1}) = \Phi_2^{-1}(\orb_{\Ga_2})$.
\item If $\Phi_1$ and $\Phi_2$ are open maps, then $\GL$ is a homeomorphism.
	The same conclusion holds if the groups $G_1$ and $G_2$ both are
	compact and $\Phi_1$ and $\Phi_2$ both are closed maps. Finally, the
	same conclusion holds if for $i + j = 3$, $G_i$ is compact and
	$\Phi_i$ is closed, and	$\Phi_j$ is open.
\item Write $M_{\Ga_1} = \Phi_1^{-1}(\Ga_1)/G_{1,\Ga_1}$ and $M_{\Ga_2} =
	\Phi_2^{-1}(\Ga_2)/G_{2,\Ga_2}$ for the respective point reduced
	spaces at $\Ga_1 \in \Fg^*_1$ and $\Ga_2 \in \Fg^*_2$.
	These spaces can be described as coadjoint orbits of the other
	action, \ie there are, resp., $G_2$- and $G_1$-equivariant
	symplectomorphisms
	\[
		M_{\Ga_1} \RA \GL(\orb_{\Ga_1})\quad
	\textnormal{ and }\quad
		M_{\Ga_2} \RA \GL^{-1}(\orb_{\Ga_2}).
	\]
\item The symplectic reduced space $M_{(\Ga_1, \Ga_2)}$ for the joint
	action of $G_1 \times G_2$ is either a point (if $\orb_{\Ga_1}$ and
	$\orb_{\Ga_2}$ are in correspondence) or empty otherwise.
\end{enumerate}
\end{Thm}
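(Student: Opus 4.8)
The plan is to reduce the whole statement to one structural observation. Combining Proposition~\ref{PR:mmlevels} with the $G_i$-equivariance and $G_j$-invariance of $\Phi_i$ (the latter from Lemma~\ref{LEM:invtPhi} and connectedness), one gets, for any $z\in M$ and $\Ga_i:=\Phi_i(z)$, that
\[
	\Phi_1^{-1}(\orb_{\Ga_1})=(G_1\times G_2)\cdot z=\Phi_2^{-1}(\orb_{\Ga_2});
\]
indeed $\Phi_1^{-1}(g\cdot\Ga_1)=g\cdot\Phi_1^{-1}(\Ga_1)=g\cdot(G_2\cdot z)=G_2\cdot(g\cdot z)$ for $g\in G_1$. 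Moreover each level set $\Phi_i^{-1}(\Ga_i)$ equals the single orbit $G_j\cdot z$, and $\Phi_j$ restricted to it has fibres exactly the $G_{i,\Ga_i}$-orbits: if $w,w'\in\Phi_1^{-1}(\Ga_1)$ and $\Phi_2(w)=\Phi_2(w')$, then $w'\in\Phi_2^{-1}(\Phi_2(w))=G_1\cdot w$ by Proposition~\ref{PR:mmlevels}, and the element of $G_1$ in question fixes $\Ga_1=\Phi_1(w)$, \ie lies in $G_{1,\Ga_1}$. Part~(i) is then formal: $\Phi_2(\Phi_1^{-1}(\orb_{\Ga_1}))=\orb_{\Phi_2(z)}$, so $\GL$ is well defined; it is onto because every $\orb_{\Ga_2}$ arises from some $z$, and one-to-one because $\GL(\orb_{\Ga_1})=\GL(\orb_{\Ga_1'})$ forces $\Phi_1^{-1}(\orb_{\Ga_1})=\Phi_1^{-1}(\orb_{\Ga_1'})$ and hence, applying $\Phi_1$, $\orb_{\Ga_1}=\orb_{\Ga_1'}$; the displayed identity gives the asserted unique joint orbit.

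For part~(ii), I would pass through the joint orbit space $N:=M/(G_1\times G_2)$ with quotient map $\pi$. Writing $p_i\colon\Phi_i(M)\RA\Phi_i(M)/G_i$ for the orbit projections, $p_i\circ\Phi_i$ is $(G_1\times G_2)$-invariant and so factors as $\psi_i\circ\pi$ with $\psi_i\colon N\RA\Phi_i(M)/G_i$ continuous; by part~(i) each $\psi_i$ is a bijection and $\GL=\psi_2\circ\psi_1^{-1}$. Now $p_i$ is always open, and is closed when $G_i$ is compact, so $p_i\circ\Phi_i$ is open if $\Phi_i$ is open and closed if $G_i$ is compact and $\Phi_i$ is closed; since $\pi$ is a continuous surjection, this openness resp.\ closedness passes to $\psi_i$, and a continuous open (or closed) bijection is a homeomorphism. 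In each of the three cases listed, both $\psi_i$, hence $\GL$, are homeomorphisms.

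For part~(iii), restrict $\Phi_2$ to $L:=\Phi_1^{-1}(\Ga_1)$. By the observation above, $L=G_2\cdot z$ is an embedded submanifold (properness), and $\Phi_2|_L$ is a $G_2$-equivariant surjection onto $\orb_{\Ga_2}$ which is a morphism of homogeneous spaces $G_2/G_{2,z}\RA G_2/G_{2,\Phi_2(z)}$, hence a surjective submersion, with fibres the $G_{1,\Ga_1}$-orbits. Thus the quotient $M_{\Ga_1}=L/G_{1,\Ga_1}$ carries a natural smooth structure for which the induced map $\bar\Phi_2$ is a $G_2$-equivariant diffeomorphism onto $\orb_{\Ga_2}=\GL(\orb_{\Ga_1})$. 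The reduced $2$-form $\omega_{\mathrm{red}}$ descends to $M_{\Ga_1}$ (the components $\Phi_1^\xi$ are constant on $L$, so $\iota_L^*\Go$ annihilates the $G_{1,\Ga_1}$-orbit directions and is $G_{1,\Ga_1}$-invariant), it is $G_2$-invariant with moment map $\bar\Phi_2$, and $G_2$ acts transitively on $M_{\Ga_1}$; since a $G_2$-invariant $2$-form on a homogeneous $G_2$-space is determined by its moment map, $\omega_{\mathrm{red}}=\bar\Phi_2^*\Go_{\orb_{\Ga_2}}$, so $\bar\Phi_2$ is a symplectomorphism (in particular $\omega_{\mathrm{red}}$ is nondegenerate). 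The case of $M_{\Ga_2}$ is symmetric.

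For part~(iv), write $M_{(\Ga_1,\Ga_2)}=\bigl(\Phi_1^{-1}(\Ga_1)\cap\Phi_2^{-1}(\Ga_2)\bigr)/(G_{1,\Ga_1}\times G_{2,\Ga_2})$. If $\orb_{\Ga_1},\orb_{\Ga_2}$ are not in correspondence, the intersection is empty, because a point $z$ in it would give $\GL(\orb_{\Ga_1})=\orb_{\Phi_2(z)}=\orb_{\Ga_2}$. If they are in correspondence, choose such a $z$; then $\Phi_1^{-1}(\Ga_1)=G_2\cdot z$ and $\Phi_2^{-1}(\Ga_2)=G_1\cdot z$, a point $w=g_2\cdot z=g_1\cdot z$ of the intersection must have $g_1\in G_{1,\Ga_1}$, $g_2\in G_{2,\Ga_2}$ (apply $\Phi_1$ resp.\ $\Phi_2$ and use $\Phi_1(w)=\Ga_1$, $\Phi_2(w)=\Ga_2$), and conversely $(G_{1,\Ga_1}\times G_{2,\Ga_2})\cdot z$ lies in the intersection; so the intersection is one $(G_{1,\Ga_1}\times G_{2,\Ga_2})$-orbit and the quotient is a point. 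I expect the main difficulty to be the smoothness in (iii): $G_{1,\Ga_1}$ need not act freely on $\Phi_1^{-1}(\Ga_1)$, so $M_{\Ga_1}$ is not a naive manifold quotient; identifying the quotient map with the bundle projection $G_2/G_{2,z}\RA G_2/G_{2,\Phi_2(z)}$ is what makes $M_{\Ga_1}$ smooth and reduces the symplectic assertion to the rigidity of invariant symplectic forms on homogeneous spaces.
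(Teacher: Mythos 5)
Your proposal is correct and rests on the same structural observation as the paper: everything is pulled out of Proposition~\ref{PR:mmlevels} via the identity $\Phi_1^{-1}(\orb_{\Ga_1})=(G_1\times G_2)\cdot z=\Phi_2^{-1}(\orb_{\Ga_2})$, and your parts (i) and (iv) are essentially what the paper writes (the paper phrases (iv) with the orbit reduced space $\Phi^{-1}(\orb_{\Ga_1}\times\orb_{\Ga_2})/(G_1\times G_2)$, you with the point reduced space; same content). The technical execution of (ii) and (iii) differs. In (ii) you factor both $p_i\circ\Phi_i$ through the joint orbit space $M/(G_1\times G_2)$ and transfer openness resp.\ closedness to the induced bijections $\psi_i$; the paper instead writes $\GL_{21}(U)=(\pi_1\circ\Phi_1)(\Phi_2^{-1}(\pi_2^{-1}(U)))$ and reads off the topological property directly --- the same idea in different clothing. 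The genuine divergence is in (iii): the paper shows $\tilde{\Phi}_2$ is a smooth equivariant bijection of homogeneous spaces and invokes Sard's theorem to get smoothness of the inverse, then verifies $i_{G_2\cdot z}^*\Go=(\Phi_{2|G_2\cdot z})^*\Go^{\orb_{\Ga_2}}$ by a direct computation from equivariance; you instead identify $\Phi_{2}|_L$ with the canonical submersion $G_2/G_{2,z}\RA G_2/G_{2,\Ga_2}$, whose fibres you match with the $G_{1,\Ga_1}$-orbits (this is exactly Lemma~\ref{LEM:stab-rels}(ii)), so that smoothness of $M_{\Ga_1}$ and of $\bar{\Phi}_2^{-1}$ come for free, and you then deduce the symplectic identification from the fact that a $2$-form on a homogeneous space admitting a prescribed moment map is uniquely determined by it. Both devices are sound: yours makes the smoothness of the quotient (which the paper relegates to Remark~\ref{RK:freeact}(2)) transparent and avoids Sard, at the price of having to check that $\bar{\Phi}_2$ is indeed a moment map for the reduced form (a one-line pullback verification you leave implicit), while the paper's route avoids that check by computing the pullback of the KKS form directly.
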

\begin{proof}
In order to simplify indices, some statements will only be proved for one
action if the other case is analogous.
\renewcommand{\theenumi}{\roman{enumi}}
\renewcommand{\labelenumi}{(\theenumi)}
\begin{enumerate}  
\item By Prop.\,\ref{PR:mmlevels}, we know that for any $z \in M$, the level
	sets of both moment maps are orbits:
	$\Phi_i^{-1}(\Phi_i(z)) = G_j \cdot z$ ($i+j = 3$). This implies
	that the preimage of any coadjoint orbit in either moment image
	$\Phi_i(M)$ ($i = 1,2$) is exactly one orbit of the joint action
	of $G_1 \times G_2$ on $M$, \ie
\[
	\Phi_i^{-1}(\Ad^*(G_i)\Phi_i(z)) = (G_1 \times G_2) \cdot z,
\]
	from which it follows that $\GL$ is well-defined and bijective.

\item Note that all maps in the diagram
\[ \xymatrix{
	\Phi_1(M)/G_1 & \Phi_1(M)\ar[l]_{\pi_1}
		& M\ar[l]_{\Phi_1}\ar[r]^{\Phi_2}
			& \Phi_2(M)\ar[r]^{\pi_2} & \Phi_2(M)/G_2
} \]
	are continuous, and $\pi_1$ and $\pi_2$ are always open as well.

Let us denote $\GL : \Phi_1(M)/G_1 \RA \Phi_2(M)/G_2$ by $\GL_{12}$ and its
set-theoretic inverse by $\GL_{21}$. If $\Phi_1$ is open, then for $U$ open
in $\Phi_2(M)/G_2$ we have that $\GL_{21}(U) = (\pi_1 \circ \Phi_1)
(\Phi_2^{-1}(\pi_2^{-1}(U)))$ is an open set, \ie $\GL_{12}$ is continuous.
Similarly, if $G_1$ is compact and $\Phi_1$ is closed, we have for $A$
closed in $\Phi_2(M)/G_2$ that $\GL_{21}(A)$ is closed as well, \ie again
$\GL_{12}$ is continuous.

	The conclusions of \labelenumi\ follow easily.

\item Let $z \in M$ and $\Ga_2 = \Phi_2(z)$ its value under the moment map
	of the second action. Consider the restricted map $\Phi_{2|G_2
	\cdot z} : G_2 \cdot z \RA \orb_{\Ga_2}$, and recall that $G_2\cdot z
	= \Phi_1^{-1}(\Ga_1)$ for $\Ga_1 = \Phi_1(z)$. Recall from
	Lemma~\ref{LEM:stab-rels} that $G_{1,\Ga_1} = \{h \in G_1 \:|\:
	\exists g \in G_2 : (h, g)\cdot z = z\}$. This group acts on
	$G_2\cdot z$ and one has $M_{\Ga_1} = G_2 \cdot z / G_{1,\Ga_1}$.
	Thus $\Phi_{2|G_2 \cdot z}$ induces
\[
	\tilde{\Phi}_2 : M_{\Ga_1} \RA \orb_{\Ga_2},
\]
	which inherits from $\Phi_2$ smoothness and $G_2$-equivariance. It
	is clearly surjective and we now show that it is injective: Take
	$\Ga \in \orb_{\Ga_2}$, $\tilde{z}_1, \tilde{z}_2 \in G_2 \cdot z /
	G_{1,\Ga_1}$ so that $\Ga = \tilde{\Phi}_2(\tilde{z}_1) =
	\tilde{\Phi}_2(\tilde{z}_2)$. Now fixing preimages $z_1, z_2 \in
	G_2 \cdot z$ of $\tilde{z}_1, \tilde{z}_2$, they have the property
	$\Phi_2(z_1) = \Phi_2(z_2)$, and for some $h \in G_1$, $z_2 =
	h \cdot	z_1$ holds because the level sets of $\Phi_2$ are $G_1$-orbits.
	However, $z_2 = g \cdot z_1$ for some $g \in G_2$, which implies by
	part (ii) of Lemma~\ref{LEM:stab-rels} that $h
	\in G_{1,\Ga_1}$. Therefore, $\tilde{z}_1 = \tilde{z}_2$, and
	$\tilde{\Phi}_2$ is a bijection. Applying Sard's Theorem, one
	notices that a smooth equivariant bijection between
	finite-dimensional homogeneous spaces has a smooth inverse. Thus
	$\tilde{\Phi}_2$ is a $G_2$-equivariant diffeomorphism and
	it remains to show that $\tilde{\Phi}_2$ is a symplectomorphism.
	Denote by $i_{G_2\cdot z} : G_2 \cdot z \RA M$ the inclusion of $G_2
	\cdot z = \Phi_1^{-1}(\Ga_1)$ into the ambient manifold. We observe
	that the equivariance properties of $\Phi_2$ imply
	$i_{G_2\cdot z}^*\Go = (\Phi_{2|G_2 \cdot z})^* \Go^{\orb_{\Ga_2}}$,
	where $\Go^{\orb_{\Ga_2}}$ is the KKS symplectic form (see, \eg
	\cite{OR-moment},  Thm.\,4.5.31). But the
	symplectic form $\Go^{M_{\Ga_1}}$ on $M_{\Ga_1}$ is defined such that
	it also pulls back to $i_{G_2\cdot z}^*\Go = p^*\Go^{M_{\Ga_1}}$,
	via the quotient map $p : G_2 \cdot z \RA G_2 \cdot z / G_{1,\Ga_1}$.
	Consequently we have $p^*\Go^{M_{\Ga_1}} = (\Phi_{2|G_2 \cdot z})^*
	\Go^{\orb_{\Ga_2}} = p^*(\tilde{\Phi}_2^* \Go^{\orb_{\Ga_2}})$.
	Since $p$ is a surjective submersion,
	the coincidence of the pullbacks (to $G_2\cdot z$)
	implies that $\Go^{M_{\Ga_1}} = \tilde{\Phi}_2^* \Go^{\orb_{\Ga_2}}$,
	which was to be shown.

\item Let $\Phi = \Phi_1 \oplus \Phi_2$. Take $\Ga_1 \in \Fg_1^*$ and
	$\Ga_2 \in \Fg_2^*$. Then
\[
	\Phi^{-1}(\orb_{\Ga_1} \times \orb_{\Ga_2})
\]
	is empty if $\orb_{\Ga_2} \ne \GL(\orb_{\Ga_1})$. Otherwise, for any
	$z \in M$ such that $\Phi_1(z) \in \orb_{\Ga_1}$ and $\Phi_2(z) \in
	\orb_{\Ga_2}$, holds
\[
	\Phi^{-1}(\orb_{\Ga_1} \times \orb_{\Ga_2})/(G_1 \times G_2)
	\~ (G_1 \times G_2)\cdot z / (G_1 \times G_2).
\]
	This quotient is, of course, a point.
	
\end{enumerate}
\end{proof}

\begin{Rem}
\label{RK:freeact}
\renewcommand{\labelenumi}{(\theenumi)}
\begin{enumerate}  
\item Statement (i) is a special case of the singular symplectic leaf
	correspondence of Thm.\,11.4.4 in \cite{OR-moment}, but here obtained
	from the symplectic Howe condition on the pair of groups actions, in
	a spirit close to the non-singular
	correspondence of Thm.\,11.1.9 of the same reference.
\item 
	Take any $z \in M$ and $\Ga_1 = \Phi_1(z)$. Then the global
	ineffectivity of the $G_{1,\Ga_1}$-action on the $\Phi_1$-level
	containing $z$ is $I_{1,\Ga_1} = \{g_1 \in G_{1,\Ga_1} \:|\:
	g_1 \cdot z' = z' \:\:\forall z' \in \Phi_1^{-1}(\Ga_1)\}$, \ie the
	intersection of the stabilizers of all points in the level set. Here,
	$\Phi_1^{-1}(\Ga_1) = G_2 \cdot z$, hence $G_{1,z'} = G_{1,z} \SSE
	I_{1,\Ga_1}$ for all $z' \in \Phi_1^{-1}(\Ga_1)$, so $I_{1,\Ga_1} =
	G_{1,z}$. Therefore, the proper action of $G_{1,\Ga_1}$ on
	$\Phi_1^{-1}(\Ga_1)$ factorizes over a free and proper action of
	$G_{1,\Ga_1}/I_{1,\Ga_1}$, and thus the quotient is a smooth manifold.
\item Part (iv) of the preceding theorem implies that the $(G_1\times
	G_2)$-action on $M$ is multiplicity-free, \ie $\Cinf(M)^{G_1\times
	G_2}$ is a commutative Poisson algebra (compare \cite{GS-mfsp} and
	\cite{HW-coisoact}). This can also be directly deduced from the
	definition of a symplectic Howe pair and Lemma~\ref{LEM:centr-invt}.
\end{enumerate}
\end{Rem}

\section{Prequantization of Symplectic Howe Pairs}

We are now going to show that the orbit correspondence which was constructed
above behaves well under geometric prequantization. The first step is to show
that the integrality of coadjoint orbits is preserved under the
correspondence. To achieve this, we make use of having proved that the reduced
spaces coincide in our setting with coadjoint orbits. Let us recall the
following fact (Thm.\,3.2 of \cite{GS-geomqu}).

\begin{Thm} 
\label{TH:red-preq}
Let $G$ be a compact connected Lie group and $(M, \Go)$ be a
prequantizable symplectic manifold with hamiltonian $G$-action, equivariant
moment map $\Phi$ and line bundle $L(M)$ equipped with a connection $\COV$
on $L(M)$ whose curvature coincides with $\Go$. If the $G$-action on the
level set $\Phi^{-1}(0)$ is free, then the reduced space 
$M_0 = \Phi^{-1}(0)/G$ is a manifold and there exists a unique line bundle
$L(M_0)$ over $M_0$ such that
\[
	\pi^*L(M_0) = i^*L(M) \textnormal{ and } \pi^*\COV_0 = i^*\COV,
\]
where $\pi : \Phi^{-1}(0) \RA \Phi^{-1}(0)/G$ is the quotient map and $i :
\Phi^{-1}(0) \RA M$ the inclusion.
\end{Thm}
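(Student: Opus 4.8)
The plan is to equip the prequantum bundle with the lift of the $G$-action that is canonically determined by the moment map, and then to descend the restricted bundle together with its connection along the quotient map $\pi$; the manifold structure of $M_0$ is settled first. For $z\in\Phi^{-1}(0)$ and $\xi\in\Fg$ one has $\ANG{\xi,T_z\Phi(v)}=\Go_z(\xi^M(z),v)$ for every $v\in T_zM$, so the annihilator of $\IM T_z\Phi$ is the Lie algebra $\Fg_z$ of the stabilizer $G_z$; since the action on $\Phi^{-1}(0)$ is free, $\Fg_z=0$, hence $T_z\Phi$ is onto and $0$ is a regular value. Thus $N:=\Phi^{-1}(0)$ is a closed submanifold, and, $G$ being compact, it acts freely and properly on $N$, so $\pi\colon N\RA M_0=N/G$ is a principal $G$-bundle and $M_0$ is a manifold.

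Next I would use the prequantum data to lift the $G$-action to $L(M)$. Because the action on $M$ is hamiltonian with equivariant moment map $\Phi$ and $\COV$ has curvature $\Go$, the classical construction of Kostant furnishes a lift of the $G$-action to $L(M)$ by connection-preserving bundle automorphisms whose infinitesimal generator in a direction $\xi\in\Fg$ is the sum of the $\COV$-horizontal lift of $\xi^M$ and a vertical term proportional to $\Phi^\xi$; that the infinitesimal lift integrates to an action of $G$ itself rests on the $G$-equivariance of $\Phi$, the moment map identity $\dd\Phi^\xi=\Go(\xi^M,\cdot)$, and the connectedness of $G$. Restricting to $N$, where $\Phi$ vanishes identically, the vertical term drops out, so the generator of the induced $G$-action on $i^*L(M)$ is precisely the $i^*\COV$-horizontal lift of $\xi^M|_N$. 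In other words, $i^*L(M)$ carries a $G$-invariant connection $i^*\COV$ whose horizontal distribution contains all the tangents to the $G$-orbits in the total space.

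Then I would descend. As $\pi$ is a principal $G$-bundle, $L(M_0):=\PAR{i^*L(M)}/G$ is a line bundle over $M_0$ with $\pi^*L(M_0)\~ i^*L(M)$. The $i^*\COV$-horizontal distribution $H$ on the total space of $i^*L(M)$ has dimension $\dim N$, is $G$-invariant, and, by the previous step, contains the $(\dim G)$-dimensional tangents to the $G$-orbits; hence it descends to a $(\dim M_0)$-dimensional distribution on $L(M_0)$ complementary to the fibres, \ie to a connection $\COV_0$ satisfying $\pi^*\COV_0=i^*\COV$. Uniqueness of $L(M_0)$ and $\COV_0$ is immediate since $\pi$ is a surjective submersion (and, as a byproduct, the curvature of $\COV_0$ is the reduced symplectic form on $M_0$).

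The step I expect to be the main obstacle is the descent of the connection: one must isolate the exact criterion under which a $G$-invariant connection on a $G$-equivariant line bundle over the total space of a principal $G$-bundle is a pullback from the base --- namely, that the fundamental vector fields on the total space be horizontal --- and then recognise that the canonical lift meets this criterion over $\Phi^{-1}(0)$ precisely because the moment map vanishes there; plain $G$-invariance of $i^*\COV$ would not be enough.
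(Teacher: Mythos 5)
Your argument is the standard Kostant--Guillemin--Sternberg one, and this is effectively also ``the paper's proof'': the authors give no proof at all but cite Thm.\,3.2 of Guillemin--Sternberg, which is established exactly along the lines you describe. Your first and third steps are correct: $\ANG{\xi,T_z\Phi(v)}=\Go_z(\xi^M(z),v)$ does show that the annihilator of $\IM T_z\Phi$ is $\Fg_z$, so freeness makes $0$ a regular value and $\pi$ a principal bundle; and your isolation of the descent criterion --- a $G$-invariant connection on a $G$-equivariant line bundle over the total space of a principal bundle descends iff the fundamental vector fields upstairs are horizontal --- is precisely the right key point, met here because the Kostant vertical term is proportional to $\Phi^\xi$, which vanishes on $\Phi^{-1}(0)$.

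The genuine gap is the sentence asserting that the infinitesimal Kostant lift ``integrates to an action of $G$ itself'' by equivariance of $\Phi$ and connectedness of $G$. Connectedness only yields an action of a covering group of $G$; whether it descends to $G$ is a nontrivial integrality condition, and without an honest $G$-action on $i^*L(M)$ the quotient $\PAR{i^*L(M)}/G$ cannot be formed. Indeed, the theorem as stated (with no lift of the $G$-action to $L(M)$ assumed) is false: take $M=S^2$ with $\int_M\Go=1$, $L(M)$ the degree-one bundle with a connection of curvature $\Go$, $G=U(1)$ acting by rotation, and the (automatically equivariant) moment map normalized so that $\Phi(M)=[-\frac12,\frac12]$. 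Then $G$ acts freely on the equator $\Phi^{-1}(0)$ and $M_0$ is a point, but $i^*\COV$ is a flat connection whose holonomy around the equator is $e^{\pi\I}=-1$ (each disc bounded by the equator has $\Go$-area $\frac12$), so $(i^*L(M),i^*\COV)$ is not a pullback from $M_0$. What rescues the statement is the extra hypothesis --- present in Guillemin--Sternberg and imposed later in this paper as a fixed ``linearization on $L$'' --- that the $G$-action comes with a given lift to $L(M)$ preserving $\COV$ and compatible with $\Phi$ via the Kostant formula; with that in hand, the vertical term of the given generators vanishes on $\Phi^{-1}(0)$ and your argument closes up. You should either add this hypothesis or prove it can be arranged, which in general requires replacing $G$ by a finite cover and hence changes the statement.
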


\begin{Not*}
If $(M, \Go)$ is a {\em prequantizable} symplectic manifold, \ie $\Go$ is an
integral form, we denote by
$L(M, \Go)$ a corresponding prequantum line bundle, \ie a complex line
bundle with first de\,Rham Chern class equal to $[\Go]$. This line bundle is
unique if there exist no torsion line bundles on $M$. We may omit $\Go$ if
there is no ambiguity about the symplectic form. For a coadjoint orbit
$\orb_\Ga$, we will write $L_\Ga = L(\orb_\Ga)$, the KKS symplectic
form $\Go^{\orb_\Ga}$ being understood.
\end{Not*}

By the shifting trick, any reduced space may be regarded as a reduced space
at $0$. More precisely, one has the following well-known result (see, \eg Thm.\,6.5.2
in \cite{OR-moment}).
\begin{Thm}
Let $G$ be a compact connected Lie group and $(M, \Go)$ be a prequantizable
symplectic manifold with hamiltonian $G$-action and equivariant moment map
$\Phi$. If $G_\Ga$ acts freely on $\Phi^{-1}(\Ga)$, then
the reduced space at $\Ga \in \Fg^*$, $\Phi^{-1}(\Ga)/G_\Ga$, is 
symplectomorphic to the reduction of $M \times \orb_{\Ga}^-$ at $0$, where
$\orb_{\Ga}^-$ denotes the coadjoint orbit through $\Ga$ with the KKS
symplectic form multiplied by $-1$.
\end{Thm}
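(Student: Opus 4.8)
The plan is to recognize the claim as the \emph{shifting trick} and to verify that the two reduced spaces carry the same symplectic form by pulling everything back to the relevant level sets. Equip the product $N := M \times \orb_\Ga^-$, with the symplectic form $\Go_N := \Go \oplus (-\Go^{\orb_\Ga})$, with the diagonal $G$-action. Since the equivariant moment map of the coadjoint $G$-action on $(\orb_\Ga, \Go^{\orb_\Ga})$ is the inclusion $\orb_\Ga \hookrightarrow \Fg^*$, the moment map of $(\orb_\Ga^-, -\Go^{\orb_\Ga})$ is minus that inclusion, so the diagonal action on $N$ is hamiltonian with equivariant moment map $\Psi(m, \nu) = \Phi(m) - \nu$. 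Hence $\Psi^{-1}(0) = \{(m, \Phi(m)) \:|\: \Phi(m) \in \orb_\Ga\}$, and the map $j : \Phi^{-1}(\orb_\Ga) \RA \Psi^{-1}(0)$, $m \MT (m, \Phi(m))$, is a $G$-equivariant bijection with inverse the first projection.

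First I would use the freeness hypothesis to see that the reduction $N_0 := \Psi^{-1}(0)/G$ is a manifold. The stabilizer of $(m, \Phi(m))$ under the diagonal action is $G_m \cap G_{\Phi(m)}$; picking $g \in G$ with $\Phi(g\cdot m) = \Ga$ (possible since $\Phi(m) \in \orb_\Ga = \Ad^*(G)\Ga$) and using $G$-equivariance of $\Phi$, this group is conjugate to $G_{g\cdot m} \cap G_\Ga$, which is trivial because $G_\Ga$ acts freely on $\Phi^{-1}(\Ga)$. So $G$ acts freely on $\Psi^{-1}(0)$ (and properly, $G$ being compact); consequently $0$ is a regular value of $\Psi$, $\Psi^{-1}(0)$ (hence also $\Phi^{-1}(\orb_\Ga) \~ \Psi^{-1}(0)$ via $j$) is a submanifold, and $N_0$ is a smooth manifold with reduced symplectic form $\Go_{N_0}$ determined by $p_0^*\Go_{N_0} = i_0^*\Go_N$, where $i_0 : \Psi^{-1}(0) \hookrightarrow N$ and $p_0 : \Psi^{-1}(0) \RA N_0$.

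Next I would identify the underlying spaces. By $G$-equivariance of $\Phi$ one has $\Phi^{-1}(\orb_\Ga) = G\cdot\Phi^{-1}(\Ga)$, and if $m, g\cdot m \in \Phi^{-1}(\Ga)$ then $\Ga = \Phi(g\cdot m) = g\cdot\Ga$, so $g \in G_\Ga$; hence the inclusion $k : \Phi^{-1}(\Ga) \hookrightarrow \Phi^{-1}(\orb_\Ga)$ induces a bijection $\Phi^{-1}(\Ga)/G_\Ga \RA \Phi^{-1}(\orb_\Ga)/G$. This bijection is in fact a diffeomorphism, e.g.\ because the slice theorem for the free proper $G$-action yields $\Phi^{-1}(\orb_\Ga) \~ G\times_{G_\Ga}\Phi^{-1}(\Ga)$, or by the Sard-type argument already used in the proof of Theorem~\ref{TH:orbcorr}\,(iii). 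Composing with the diffeomorphism on quotients induced by $j$, one obtains a diffeomorphism $F : \Phi^{-1}(\Ga)/G_\Ga \RA N_0$ satisfying $F\circ q = p_0 \circ j \circ k$, where $q : \Phi^{-1}(\Ga) \RA \Phi^{-1}(\Ga)/G_\Ga$ is the quotient map.

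Finally I would check that $F$ is a symplectomorphism. Pulling $\Go_N$ back along $i_0 \circ j$ gives $\Go|_{\Phi^{-1}(\orb_\Ga)} - (\Phi|_{\Phi^{-1}(\orb_\Ga)})^*\Go^{\orb_\Ga}$; restricting further along $k$ and using that $\Phi\circ k$ is the constant map $\Ga$ (so $k^*(\Phi^*\Go^{\orb_\Ga}) = 0$), one gets $k^* j^* i_0^*\Go_N = \Go|_{\Phi^{-1}(\Ga)}$, which by definition of the reduced form $\Go_\Ga$ at $\Ga$ equals $q^*\Go_\Ga$. On the other hand $k^* j^* i_0^*\Go_N = k^* j^* p_0^*\Go_{N_0} = (F\circ q)^*\Go_{N_0} = q^*(F^*\Go_{N_0})$; since $q$ is a surjective submersion, $q^*$ is injective on forms, so $F^*\Go_{N_0} = \Go_\Ga$, as desired. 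The one step needing genuine care is the claim that $\Phi^{-1}(\Ga)/G_\Ga \RA \Phi^{-1}(\orb_\Ga)/G$ is smooth with smooth inverse rather than a mere continuous bijection; this is exactly where compactness of $G$ (hence properness of the free action and the local triviality $\Phi^{-1}(\orb_\Ga) \~ G\times_{G_\Ga}\Phi^{-1}(\Ga)$) enters. The sign bookkeeping for the moment map of $\orb_\Ga^-$ is the other easy-to-slip but routine point.
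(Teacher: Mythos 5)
Your argument is correct: it is the standard proof of the shifting trick, and each step (the moment map $\Psi(m,\nu)=\Phi(m)-\nu$ on $M\times\orb_\Ga^-$, the freeness/regular-value argument, the identification $\Phi^{-1}(\Ga)/G_\Ga \~ \Phi^{-1}(\orb_\Ga)/G \~ \Psi^{-1}(0)/G$, and the comparison of reduced forms via pullback to $\Phi^{-1}(\Ga)$, where $\Phi^*\Go^{\orb_\Ga}$ dies) is sound. The paper itself offers no proof here, merely citing the result as well known (Thm.\,6.5.2 of \cite{OR-moment}), so your write-up supplies exactly the omitted standard argument.
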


Returning to the setting of Theorem~\ref{TH:orbcorr}, we conclude that any
reduced space $M_{\Ga_1}$ admits a line bundle $L(M_{\Ga_1})$ if the coadjoint
orbit $\orb_{\Ga_1}$ does. Recall from Remark~\ref{RK:freeact}(2) that the
quotient of $\Phi_1^{-1}(\Ga_1)$ by $G_{1,\Ga_1}$ is also given as the
quotient by the free ($G_{1,\Ga_1}/I_{1,\Ga_1}$)-action on
$\Phi_1^{-1}(\Ga_1)$ and hence $M_{\Ga_1}$ is smooth. Therefore, we can adapt
Theorem~\ref{TH:red-preq} to this quotient. Using the
symplectomorphism $\tilde{\Phi}_2 : M_{\Ga_1} \RA \orb_{\Ga_2}$, we
obtain the bundle $\PAR{\tilde{\Phi}_2^{-1}}^* L(M_{\Ga_1})$ over
$\orb_{\Ga_2}$.
As coadjoint orbits of compact connected Lie groups are simply connected, 
there are no non-trivial flat vector bundles, hence no torsion line bundles
over them. Therefore, for any $\Ga_1$
such that $(\orb_{\Ga_1}, \Go^{\orb_{\Ga_1}})$ is an integral symplectic
manifold, the bundle $\PAR{\tilde{\Phi}_2^{-1}}^* L(M_{\Ga_1})$ is the
unique prequantum line bundle over $\orb_{\Ga_2}$. Thus we have proved:

\begin{Prop}
\label{PR:intcorr}
Let $\orb_{\Ga_1}$ and $\orb_{\Ga_2}$ be two coadjoint orbits in
correspondence as in Thm.\,\ref{TH:orbcorr}(i) and assume furthermore
that $G_1$ and $G_2$ are compact. Then $(\orb_{\Ga_1},
\Go^{\orb_{\Ga_1}})$ is an integral symplectic manifold if and only if
$(\orb_{\Ga_2}, \Go^{\orb_{\Ga_2}})$ is an integral symplectic manifold, too.
\end{Prop}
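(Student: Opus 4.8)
The plan is to exploit the symmetry already built into the statement and its hypotheses: the symplectic Howe condition is symmetric in the indices $1$ and $2$, the orbit correspondence $\GL$ is a bijection with set-theoretic inverse, and the identification of reduced spaces with coadjoint orbits in Theorem~\ref{TH:orbcorr}(iii) holds for both actions simultaneously. Hence it suffices to prove one implication, say that integrality of $(\orb_{\Ga_1}, \Go^{\orb_{\Ga_1}})$ forces integrality of $(\orb_{\Ga_2}, \Go^{\orb_{\Ga_2}})$, and then apply the argument verbatim with the roles of the groups exchanged to get the converse.

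For the forward implication I would argue as in the paragraph preceding the proposition. Assume $(\orb_{\Ga_1}, \Go^{\orb_{\Ga_1}})$ is integral, so it carries a prequantum line bundle $L_{\Ga_1} = L(\orb_{\Ga_1})$ with a connection whose curvature is $\Go^{\orb_{\Ga_1}}$. By Theorem~\ref{TH:orbcorr}(iii), the reduced space $M_{\Ga_1} = \Phi_1^{-1}(\Ga_1)/G_{1,\Ga_1}$ is $G_2$-equivariantly symplectomorphic to $\orb_{\Ga_2} = \GL(\orb_{\Ga_1})$ via $\tilde{\Phi}_2$; in particular $(M_{\Ga_1}, \Go^{M_{\Ga_1}})$ is integral, carrying the pullback bundle $\tilde{\Phi}_2^* L_{\Ga_1}$. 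Conversely, pushing that bundle forward: since $M_{\Ga_1}$ and $\orb_{\Ga_2}$ are symplectomorphic, $\orb_{\Ga_2}$ carries the line bundle $(\tilde{\Phi}_2^{-1})^* L(M_{\Ga_1})$ with the correct curvature, which exhibits $(\orb_{\Ga_2}, \Go^{\orb_{\Ga_2}})$ as integral. Here one uses Remark~\ref{RK:freeact}(2) to know that $M_{\Ga_1}$ is genuinely a smooth manifold (the $G_{1,\Ga_1}$-action factors through a free proper action of $G_{1,\Ga_1}/I_{1,\Ga_1}$), so that talk of line bundles and curvature forms over it makes sense, and the compactness of $G_1$, $G_2$ guarantees that the relevant orbit spaces and reductions behave as in the quoted theorems.

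The one subtlety — what I would flag as the main point to get right rather than a genuine obstacle — is the role of torsion line bundles, i.e. ensuring that the bundle one constructs really has first Chern class $[\Go^{\orb_{\Ga_2}}]$ and is moreover essentially canonical. This is handled by the fact that coadjoint orbits of compact connected Lie groups are simply connected, hence admit no non-trivial flat line bundles and a fortiori no torsion line bundles; so once a line bundle with the right curvature exists over $\orb_{\Ga_2}$, integrality follows and the prequantum bundle is unique. The same remark applied to $\orb_{\Ga_1}$ closes the converse direction. Everything else — equivariance, smoothness of the quotients, the symplectomorphism property of $\tilde{\Phi}_2$ — has already been established in Theorem~\ref{TH:orbcorr} and the cited results of \cite{GS-geomqu} and \cite{OR-moment}, so the proof is essentially an assembly of those ingredients together with the index-symmetry observation.
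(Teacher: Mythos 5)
There is a genuine gap at the one step that carries the content of the proposition: the passage from ``$\orb_{\Ga_1}$ is integral'' to ``$M_{\Ga_1}$ carries a prequantum line bundle''. You assert that $(M_{\Ga_1}, \Go^{M_{\Ga_1}})$ is integral, ``carrying the pullback bundle $\tilde{\Phi}_2^* L_{\Ga_1}$'', but $\tilde{\Phi}_2$ maps $M_{\Ga_1}$ to $\orb_{\Ga_2}$, not to $\orb_{\Ga_1}$, so there is no map along which $L_{\Ga_1}$ could be pulled back to $M_{\Ga_1}$; and deducing integrality of $M_{\Ga_1}$ from the symplectomorphism $M_{\Ga_1} \cong \orb_{\Ga_2}$ of Theorem~\ref{TH:orbcorr}(iii) would require already knowing that $\orb_{\Ga_2}$ is integral, which is the conclusion. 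The reduced space $M_{\Ga_1}$ is not in any natural way symplectomorphic to $\orb_{\Ga_1}$, so integrality of $\orb_{\Ga_1}$ cannot be transferred to it by a diffeomorphism argument; as written, the forward implication is circular.

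The missing ingredient is exactly the mechanism the paper uses: the shifting trick identifies $M_{\Ga_1}$ with the reduction at $0$ of $M \times \orb_{\Ga_1}^-$, and Theorem~\ref{TH:red-preq} (Guillemin--Sternberg) then descends the prequantum line bundle with connection of $M \times \orb_{\Ga_1}^-$ --- which exists precisely because $M$ is prequantizable (the standing hypothesis of the section, with bundle $L$) \emph{and} $\orb_{\Ga_1}$ is integral --- to a line bundle $L(M_{\Ga_1})$ with connection of curvature $\Go^{M_{\Ga_1}}$; Remark~\ref{RK:freeact}(2) is what allows one to apply the free-action version of that theorem to the quotient by $G_{1,\Ga_1}/I_{1,\Ga_1}$. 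Only after this step do the remaining parts of your argument (transport along $(\tilde{\Phi}_2^{-1})^*$, uniqueness of the prequantum bundle via simple connectedness of coadjoint orbits of compact connected groups, and the index symmetry giving the converse) go through as written. You should also state explicitly that prequantizability of $M$ itself is being used: the proposition is silent about it, but without it the implication fails.
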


\begin{Rem}
Let us recall that for $G$ a compact connected semisimple Lie group and $\Ga
\in \Fg^*$, $\orb_\Ga$ is an integral symplectic manifold if and only if
there exists a character $\chi_\Ga : G_\Ga \RA U(1)$ such that
$(\chi_\Ga)_{*e} = 2\pi\I\Ga$. Thus one defines:
\end{Rem}

\begin{Def}
\label{DEF:integrelem}
\renewcommand{\labelenumi}{(\theenumi)}
\begin{enumerate}
\item Let $G$ be a compact connected Lie group and $\Ga \in \Fg^*$. We call
$\Ga$ {\em integral} if there exists $\chi_\Ga : G_\Ga \RA U(1)$ such that
$(\chi_\Ga)_{*e} = 2\pi\I\Ga$.
\item If $\Ga$ is integral, we call $\orb_\Ga$ an {\em integral orbit}.
\end{enumerate}
\end{Def}

\begin{Rem}
Of course, there is a natural bijection between the set of integral orbits
and the set of dominant weights of $G$ (w.\,r.\,t.\ a fixed Weyl chamber
$\Ft^+$).
\end{Rem}

We observe that integral coadjoint orbits are integral symplectic manifolds
but the converse statement is not true in general because of the possible
presence of a positive-dimensional centre. Accordingly, we need to refine
Proposition~\ref{PR:intcorr}.

Let us from now on assume that the $(G_1\times G_2)$-action on $M$ comes with
a fixed ``linearization on $L$'', \ie there is given a fibrewise linear
$(G_1 \times G_2)$-action on $L = L(M, \Go)$  covering the
$(G_1 \times G_2)$-action on $M$. This is no restriction if one accepts to
replace $G_1$ and $G_2$ by finite coverings (see \cite{Duis-Spinc}, Prop.\,15.4).

\begin{Thm}
Let $\orb_{\Ga_1}$ and $\orb_{\Ga_2}$ be two coadjoint orbits in
correspondence as in Thm.\,\ref{TH:orbcorr}(i) and assume furthermore
that $G_1$ and $G_2$ are compact. Then $\orb_{\Ga_1}$ is
integral if and only if $\orb_{\Ga_2}$ is integral.
\end{Thm}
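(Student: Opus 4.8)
The plan is to upgrade Proposition~\ref{PR:intcorr} from integrality as a symplectic manifold to integrality of the coadjoint element, using the fixed linearization on $L$ to keep track of the extra data sitting in the stabilizer. Recall that $\orb_{\Ga_i}$ being integral (in the sense of Definition~\ref{DEF:integrelem}) is equivalent to the existence of a character $\chi_{\Ga_i} : G_{i,\Ga_i} \RA U(1)$ with $(\chi_{\Ga_i})_{*e} = 2\pi\I\Ga_i$; equivalently, the natural $G_{i,\Ga_i}$-action on the fibre $L_{\Ga_i}|_{\Ga_i}$ of the prequantum line bundle over $\orb_{\Ga_i}$ is by a character that differentiates to $2\pi\I\Ga_i$. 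So the task is to transport this character datum across the correspondence, not merely the isomorphism class of the line bundle.

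First I would fix $z \in M$ with $\Phi_1(z) = \Ga_1$, $\Phi_2(z) = \Ga_2$, so that (by Proposition~\ref{PR:mmlevels}) $\Phi_1^{-1}(\Ga_1) = G_2 \cdot z$ and, by Lemma~\ref{LEM:stab-rels}(ii), $H_{1,z} = G_{1,\Ga_1}$ and $H_{2,z} = G_{2,\Ga_2}$. Next I would use the linearization of the $(G_1\times G_2)$-action on $L(M,\Go)$: the stabilizer $G_{12,z}$ acts linearly on the fibre $L_z := L(M,\Go)|_z$, giving a character $\chi_z : G_{12,z} \RA U(1)$. The heart of the argument is to identify this with the pullbacks of the orbit characters. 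On one hand, restricting along the inclusion $\Phi_1^{-1}(\Ga_1) = G_2 \cdot z \hookrightarrow M$ and the symplectomorphism $\tilde\Phi_2 : M_{\Ga_1} \RA \orb_{\Ga_2}$ from Theorem~\ref{TH:orbcorr}(iii), together with the reduction of the prequantum bundle (Theorem~\ref{TH:red-preq}, adapted via Remark~\ref{RK:freeact}(2) to the free action of $G_{1,\Ga_1}/I_{1,\Ga_1}$), the $G_{2,\Ga_2}$-equivariant structure on $L(\orb_{\Ga_2})$ near $\Ga_2$ is read off from the $(G_1\times G_2)$-linearization on $L_z$: concretely, for $g' \in G_{2,\Ga_2}$, pick $g \in G_1$ with $(g,g')\cdot z = z$ (possible by Lemma~\ref{LEM:stab-rels}(ii)), and then the action of $g'$ on $L_{\Ga_2}|_{\Ga_2}$ equals the action of $(g,g')^{-1}$ composed with the $G_1$-action, i.e.\ is governed by $\chi_z$ twisted by the $G_1$-part. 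Symmetrically for the index $1$. Differentiating these relations at $e$ and using $\dPh_i^\xi = \Go(\tau_i(\xi),\cdot)$ together with the curvature condition $\mathrm{curv}(\COV) = \Go$ (so that the infinitesimal action on $L_z$ in direction $\xi$ is scalar multiplication by $2\pi\I\Phi_i^\xi(z) = 2\pi\I\ANG{\xi,\Ga_i}$ plus the covariant derivative along $\tau_i(\xi)$, which vanishes at a fixed point), one gets that the character of $G_{i,\Ga_i}$ on the reduced fibre differentiates exactly to $2\pi\I\Ga_i$.

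Assembling this: if $\orb_{\Ga_1}$ is integral, the character $\chi_{\Ga_1} : G_{1,\Ga_1} \RA U(1)$ exists with $(\chi_{\Ga_1})_{*e} = 2\pi\I\Ga_1$; the linearization on $L$ then forces, on the fibre $L_z$, a well-defined character of $G_{12,z}$, whose image under the projection $G_{12,z} \RA H_{2,z} = G_{2,\Ga_2}$ (which is actually a section up to the common ineffectivity kernel $I$, since $G_{1,z} = G_{2,z} = I$ acts trivially on $L_z$ by properness-plus-connectedness arguments already used in Remark~\ref{RK:freeact}(2)) descends to a character $\chi_{\Ga_2} : G_{2,\Ga_2} \RA U(1)$ with $(\chi_{\Ga_2})_{*e} = 2\pi\I\Ga_2$. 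Hence $\orb_{\Ga_2}$ is integral, and the converse is symmetric.

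The main obstacle I anticipate is the bookkeeping around the ineffectivity kernel and the passage from $G_{12,z}$ to the two projected stabilizers: one must check that $G_{1,z} = G_{2,z}$ (both equal to $I$, the kernel of $G_{12,z} \RA H_{1,z}$ and $\RA H_{2,z}$ respectively), that this common kernel acts trivially on the fibre $L_z$ — which needs the linearization to be compatible with connectedness of the groups, or else a finite-covering adjustment as flagged before the theorem — and that the resulting characters on $G_{1,\Ga_1}$ and $G_{2,\Ga_2}$ are genuinely well-defined (not just defined up to the kernel). Once that diagram of stabilizers and its lift to the line bundle is set up cleanly, the differentiation step is a routine computation using the moment map property and the curvature identity.
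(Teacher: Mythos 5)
Your proposal is correct and follows essentially the same route as the paper: descend the $(G_1\times G_2)$-linearization of $L$ to the reduced bundle $L(M_{\Ga_1})$, transport it via $\tilde\Phi_2$ to $L(\orb_{\Ga_2})$, read off a character of $G_{2,\Ga_2}$ on the fibre over $\Ga_2$, and differentiate using the Kostant formula $\xi^{L}=\widetilde{\xi^{\orb_{\Ga_2}}}+p^*\ANG{\Phi_2^{\orb_{\Ga_2}},\xi}(2\pi\I)^{L}$ to get $(\chi)_{*e}=2\pi\I\Ga_2$. The paper avoids the stabilizer bookkeeping you worry about (in particular the ill-posed identification $G_{1,z}=G_{2,z}$, which live in different groups) by working directly with the already-reduced bundle via Theorem~\ref{TH:red-preq} and Remark~\ref{RK:freeact}(2) rather than at the fibre $L_z$, but this is a presentational difference, not a different proof.
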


\begin{proof}
Let $\Ga \in \Fg_1^*$ be integral. Since the $G_2$-actions on $M$ and $L$
commute with the $G_1$-actions, the $G_2$-action on $L$ descends canonically
to a $G_2$-action on the line bundle $L(M_{\Ga_1})$ covering the natural
$G_2$-action on $M_{\Ga_1}$.

Using the $G_2$-equivariant symplectomorphism $\tilde{\Phi}_2 :
(M_{\Ga_1}, \Go^{M_{\Ga_1}}) \RA (\orb_{\Ga_2}, \Go^{\orb_{\Ga_2}})$ from
above, we have the bundle $(\tilde{\Phi}_2^{-1})^*L(M_{\Ga_1}) =
L(\orb_{\Ga_2})$ together with a $G_2$-invariant connection $\COV$ given by
pulling back the $G_2$-invariant connection induced on $L(M_{\Ga_1})$ by a
$(G_1 \times G_2)$-invariant connection on $L \RA M$. (See
\cite{GS-geomqu} for the construction of the connection on $L(M_{\Ga_1})$.)
We thus arrive at the following commuting diagram (where $p$ denotes the
bundle projection and $L(\orb_{\Ga_2})_{\Ga_2} = p^{-1}(\Ga_2)$):

\[\xymatrix{
	G_2 \times L(\orb_{\Ga_2}) \ar[r] \ar[d]_{\ID \times p}
					& L(\orb_{\Ga_2}) \ar[d]^{p}\\
	G_2 \times \orb_{\Ga_2} \ar[r] & \orb_{\Ga_2},
}\]
yielding a homomorphism $\chi : G_{2, \Ga_2} \RA U(L(\orb_{\Ga_2})_{\Ga_2}) =
U(1)$.

It remains to show that $\chi = \chi_{\Ga_2}$, \ie $\chi_{*e} =
2\pi\I\Ga_2$. Since the $G_2$-action on $L(\orb_{\Ga_2})$ and the connection
$\COV$ come from the reduced bundle with connection $L(M_{\Ga_1})$ and this
comes in turn from a connection on $L \RA M$, we have the usual ``Kostant
formula'' for the fundamental vector fields of the $G_2$-action on
$L(\orb_{\Ga_2})$. (Compare \cite{Kost-preq}, Thm.\,4.5.1.) More precisely,
given $\xi \in \Fg_2$, $\xi^{L(\orb_{\Ga_2})} = \widetilde{\xi^{\orb_{\Ga_2}}}
+ p^*(\ANG{\Phi_2^{\orb_{\Ga_2}}, \xi}) (2\pi\I)^{L(\orb_{\Ga_2})}$, where
$\xi^N$ denotes the fundamental vector field associated to $\xi$ on a
$G_2$-manifold $N$, $\widetilde{X}$ denotes the $\COV$-horizontal lift of a
vector field $X$ on $\orb_{\Ga_2}$ to $L(\orb_{\Ga_2})$, $2\pi\I \in \I\R \~
\Fu(1)$ also has a fundamental vector field on the bundle by the canonical
$U(1)$-action on it, and $\ANG{\Phi_2^{\orb_{\Ga_2}}, \xi}$ is the
$\xi$-component of the $G_2$-moment map on $\orb_{\Ga_2}$.

For $\xi \in \Fg_{2,\Ga_2}$, the field $\xi^{L(\orb_{\Ga_2})}$ is now tangent
to the $p$-fibre $L(\orb_{\Ga_2})_{\Ga_2}$ over $\Ga_2$ and equals there
$(2\pi\I)^{L(\orb_{\Ga_2})} \ANG{\Ga_2, \xi}$.
Thus $\chi_{*e} : G_{2,\Ga_2} \RA U(1)$ is equal to $\TFpt{t}{0} e^{2\pi\I
\ANG{\Ga_2, \xi} t} = 2\pi\I \ANG{\Ga_2, \xi}$.
\end{proof}

\section{Geometric Quantization of Symplectic Howe Pairs}

In this section we show that the symplectic Howe condition for the actions
on $M$ implies the representation-theoretic Howe condition for the linear
representation on the geometric quantization of $(M, \Go)$ in the K\"ahler
case. Thus we assume throughout this section that $(M, \Go)$ is K\"ahler,
\ie there is given a complex structure $J$ such that the associated
riemannian metric $g(\cdot, \cdot) = \Go(\cdot, J\cdot)$ is positive
definite and hermitean. Furthermore we assume that the acting groups are
compact and connected and that the moment maps are admissible in the sense
of \cite{Sj-redmult} (p.\,109).
\begin{Def}
Let $G$ be a compact connected Lie group acting holomorphically on a K\"ahler
manifold $M$ with equivariant moment map $\Phi$.
Choose a $G$-invariant inner product on $\Fg$ with corresponding norm and
define the function $\mu = \|\Phi\|^2$. Let $F_t$ be the gradient flow of
$-\mu$. A moment map is called {\em admissible} if for every $z \in M$, the
path of steepest descent $F_t(z)$ through $z$ ($t \ge 0$) is contained in a
compact set.
\end{Def}
\begin{Rem}
Examples of admissible moment maps are all proper moment maps and the moment
map of the natural linear $U(n)$-action on $\C^n$.
\end{Rem}

One then has the following result (Thm.\,2.20 of  \cite{Sj-redmult}).
\begin{Thm}
\label{TH:QRcomm}
Let $G$ be a compact connected Lie group acting holomorphically on a K\"ahler
manifold $M$ with admissible equivariant moment map $\Phi$. Suppose this
action extends to an action of the complexification $G^\C$ of $G$. Then for
every dominant weight $\Ga$ of $G$, the space of holomorphic sections
$\GG_\mathrm{\!hol}(M_\Ga, L_\Ga)$ of the prequantum line bundle $L_\Ga$ 
over the symplectic reduced space $M_\Ga$ is naturally isomorphic to
$\HOM_G(V_\Ga, \GG_\mathrm{\!hol}(M, L))$, the space of intertwining
operators from the irreducible representation $V_\Ga$ with highest weight
$\Ga$ to the quantization $\GG_\mathrm{\!hol}(M, L)$ of $M$.
\end{Thm}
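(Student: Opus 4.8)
The plan is to follow the Guillemin--Sternberg strategy for ``quantization commutes with reduction'': exhibit a natural restriction map from invariant holomorphic sections on $M$ to holomorphic sections on the reduced space and show it is an isomorphism, after first reducing the general dominant weight $\Ga$ to the case $\Ga=0$ by the shifting trick recalled above. For the reduction step I would use Borel--Weil, $V_\Ga^* \~ \GG_\mathrm{\!hol}(\orb_\Ga^-, L(\orb_\Ga^-))$, together with a K\"unneth decomposition of the holomorphic sections of an external tensor product to write
\[
\HOM_G(V_\Ga, \GG_\mathrm{\!hol}(M,L))
\~ \PAR{V_\Ga^* \otimes \GG_\mathrm{\!hol}(M,L)}^{G}
\~ \GG_\mathrm{\!hol}(\orb_\Ga^- \times M,\, L(\orb_\Ga^-) \boxtimes L)^{G}.
\]
By the shifting theorem recalled above, applied to the $G$-manifold $\orb_\Ga^- \times M$ with moment map $\Phi \oplus(-\ID)$, the reduction at $0$ is exactly $M_\Ga$ with reduced prequantum bundle $L_\Ga$; since $\orb_\Ga^-$ is compact its moment map is proper, so the product moment map is again admissible. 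Hence it suffices to prove, for any admissible Hamiltonian K\"ahler $G^\C$-manifold, the isomorphism $\GG_\mathrm{\!hol}(M,L)^{G} \~ \GG_\mathrm{\!hol}(M_0, L_0)$ with $M_0 = \Phi^{-1}(0)/G$.

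The natural map $R$ sends a $G$-invariant holomorphic section $s$ to its restriction to $\Phi^{-1}(0)$, which descends to a section $R(s)$ of the reduced bundle $L_0$. I would verify that $R(s)$ is holomorphic for the complex-analytic structure that $M_0$ inherits via the Kempf--Ness identification $M_0 \~ M^{\mathrm{ss}}/\!/G^\C$ of the symplectic quotient with the analytic quotient of the semistable locus $M^{\mathrm{ss}} = G^\C \cdot \Phi^{-1}(0)$. Here the admissibility of $\Phi$ is exactly what makes the Morse theory of $\mu = \|\Phi\|^2$ work on a possibly noncompact $M$: the negative gradient flow of $\mu$ converges, $M^{\mathrm{ss}}$ is open, and every semistable point flows down to the zero fibre, so the two structures on $M_0$ agree.

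Injectivity of $R$ is the identity theorem in disguise. If $R(s)=0$ then $s$ vanishes on $\Phi^{-1}(0)$; since $s$ is holomorphic and $G$-invariant it is automatically $G^\C$-invariant, so it vanishes on the whole saturation $G^\C \cdot \Phi^{-1}(0) = M^{\mathrm{ss}}$, and this set being open and dense forces $s \equiv 0$.

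The main obstacle is surjectivity: extending a holomorphic section of $L_0$ over the possibly singular quotient $M_0$ to a global invariant holomorphic section of $L$ on all of $M$. The key tool is the holomorphic slice theorem (Heinzner--Loose, Sjamaar): near any $z \in \Phi^{-1}(0)$ with stabilizer $H = G_z$ there is a $G^\C$-invariant neighborhood biholomorphic to an associated bundle $G^\C \times_{H^\C} S$ for a holomorphic $H^\C$-slice $S$, with local reduced space $S/\!/H^\C$. This reduces the problem to the local model, where a holomorphic section over the quotient lifts to an $H^\C$-invariant holomorphic section on $S$ and thence to a $G^\C$-invariant section on $G^\C \times_{H^\C} S$. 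Because invariant holomorphic sections form a sheaf and the local solutions are unique by the injectivity just established, they agree on overlaps and patch to an invariant section over $M^{\mathrm{ss}}$; admissibility then supplies the gradient-flow retraction of $M^{\mathrm{ss}}$ onto a neighborhood of $\Phi^{-1}(0)$, through which the section extends across the non-semistable locus to all of $M$. The delicate points are precisely the singularities of $M_0$, which force one to work with holomorphic sections over a complex-analytic space and to use the slice theorem rather than a smooth reduction, and the noncompactness of $M$, which is controlled throughout by the admissibility hypothesis.
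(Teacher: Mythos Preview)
The paper does not prove this theorem: it is quoted verbatim as Thm.\,2.20 of \cite{Sj-redmult} and used as a black box, so there is no ``paper's own proof'' to compare against. Your outline is in fact a faithful sketch of Sjamaar's argument (which in turn refines the compact case of Guillemin--Sternberg \cite{GS-geomqu}): the shifting-trick reduction to $\Ga=0$, the restriction map to $\Phi^{-1}(0)$ followed by descent to $M_0$, injectivity via density of $M^{\mathrm{ss}}$, and surjectivity via the holomorphic slice theorem are exactly the ingredients of that proof.

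One step in your sketch is stated a bit loosely. You write that once an invariant holomorphic section is built on $M^{\mathrm{ss}}$, ``admissibility then supplies the gradient-flow retraction \ldots\ through which the section extends across the non-semistable locus to all of $M$.'' A retraction by a \emph{real} gradient flow does not by itself extend holomorphic objects; what is actually used is that the unstable set $M \setminus M^{\mathrm{ss}}$ is analytic and that invariant holomorphic sections on $M^{\mathrm{ss}}$ extend across it by a Riemann/Hartogs-type argument (boundedness near the unstable strata, together with normality/codimension estimates coming from the stratification). In Sjamaar's paper this is handled carefully using the Morse theory of $\|\Phi\|^2$ to control the unstable strata and an extension lemma for holomorphic sections; your sentence should be replaced by a pointer to that mechanism rather than to the flow itself. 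Apart from this, your proposal matches the intended proof.
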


\begin{Rem}
\renewcommand{\labelenumi}{(\theenumi)}
\begin{enumerate}
\item Of course, {\em dominance} of a weight (an analytically integral
	element of $\Ft^*$) is defined w.\,r.\,t.\ the choice of a Weyl
	chamber. It is introduced in the statement of the preceding theorem
	only to avoid redundancy since one has $M_{\Ad^*(g)\Ga} \~ M_\Ga$
	and $V_{\Ad^*(g)\Ga} \~ V_\Ga$. Thus one may write here {\em
	integral point of $\Fg^*$} in the sense of
	Def.\,\ref{DEF:integrelem} as well.
\item The reduced spaces $M_\Ga$ inherit, also in the singular case,
	``sufficient'' structure in order to define holomorphic sections of
	$L_\Ga \RA M_\Ga$ (compare \cite{Sj-redmult}). By
	Theorem~\ref{TH:orbcorr} and Remark~\ref{RK:freeact}(2) the reduced
	spaces occurring in our ``symplectic Howe setting'' are always
	smooth. Thus, in fact, we do not need the results of Sjamaar in its
	full generality.
\item If $\Ga$, interpreted as an element of $\Fg^*$, is not in the moment
	image $\Phi(M)$, then $\GG_\mathrm{\!hol}(M_{\Ga}, L_{\Ga})$ is
	to be interpreted as $\{0\}$. Theorem~\ref{TH:QRcomm} then says that
	$V_\Ga$ does not occur in the $G$-decomposition of
	$\GG_\mathrm{\!hol}(M, L)$.
\end{enumerate}
\end{Rem}

Take now the simultaneous $G_1 \times G_2$-action introduced in the previous
section. By the Borel-Weil theorem, the geometric quantizations, $V_{\Ga_i} \~
\GG_\mathrm{\!hol}(\orb_{\Ga_i}, L_{\Ga_i})$ with $\Ga_i$ integral, realize the
irreducible representations of $G_i$, thus  $\GG_\mathrm{\!hol}(\orb_{\Ga_1}
\times \orb_{\Ga_2}, L_{\Ga_1} \boxtimes L_{\Ga_2})$ the irreducibles for
$G_1 \times G_2$. Here $L_{\Ga_1} \boxtimes L_{\Ga_2} \RA M_1 \times M_2$ is
given as $p_1^*(L_{\Ga_1}) \otimes p_2^*(L_{\Ga_2})$ with $p_j : M_1 \times
M_2 \RA M_j$ denoting the $j$-th projection for $j = 1,2$. Now,
\begin{multline*}
	\HOM_{G_1\times G_2}\PAR{
		\GG_\mathrm{\!hol}(\orb_{\Ga_1} \times \orb_{\Ga_2},
					L_{\Ga_1} \boxtimes L_{\Ga_2}),
		\GG_\mathrm{\!hol}(M, L)}\\
	\~ \GG_\mathrm{\!hol}\PAR{
		\Phi^{-1}(\orb_{\Ga_1} \times \orb_{\Ga_2})/(G_1 \times G_2),
		 L_{(\Ga_1,\Ga_2)}}.
\end{multline*}
By Thm.\,\ref{TH:orbcorr}(iv), the reduced space may be empty (if the
coadjoint orbits are not in correspondence), hence the space of sections
$\GG_\mathrm{\!hol}(\Phi^{-1}(\orb_{\Ga_1} \times \orb_{\Ga_2})/
(G_1 \times G_2), L_{(\Ga_1,\Ga_2)})$ is trivial; otherwise the reduced space
is a point and the space of sections is simply $\C$. So, one concludes for
the multiplicities of $G_1\times G_2$-representations in the quantization of
$M$:
\[
	\dim \HOM_{G_1\times G_2}\PAR{
		\GG_\mathrm{\!hol}(\orb_{\Ga_1} \times \orb_{\Ga_2},
					L_{\Ga_1} \boxtimes L_{\Ga_2}),
		\GG_\mathrm{\!hol}(M, L)} \le 1,
\]
the equal sign being true if and only if $\orb_{\Ga_2} = \GL(\orb_{\Ga_1})$.

~\\

Interpreting the line bundles $L_{\Ga_1},  L_{\Ga_2}$ and $L_{\Ga_1}
\boxtimes L_{\Ga_2}$ as sheaves and their holomorphic sections as their
zeroth cohomology group, one concludes from a standard K\"unneth formula
(compare \cite{SampWash} and \cite{Kaup}) that
\[
\GG_\mathrm{\!hol}(\orb_{\Ga_1} \times \orb_{\Ga_2},
					L_{\Ga_1} \boxtimes L_{\Ga_2})
	\~ \GG_\mathrm{\!hol}(\orb_{\Ga_1}, L_{\Ga_1})
	\otimes \GG_\mathrm{\!hol}(\orb_{\Ga_2}, L_{\Ga_2}).
\]

Recall that by \ref{TH:orbcorr}(iii), the coadjoint $G_2$-orbit corresponding
via $\GL$ to $\orb_{\Ga_1} \in \Phi_1(M)/G_1$ is symplectomorphic to the orbit
reduced space of $\orb_{\Ga_1}$, \ie
\[
	\GL(\orb_{\Ga_1}) \~ \Phi_1^{-1}(\orb_{\Ga_1})/G_1 = M_{\Ga_1}.
\]

This implies that the multiplicity space of one action is an
irreducible representation of the other action, \ie for $\orb_{\Ga_2} =
\GL(\orb_{\Ga_1})$, one has $G_2$-equivariantly
\[
	\HOM_{G_1}(V_{\Ga_1}, \GG_\mathrm{\!hol}(M, L)) \~
		\GG_\mathrm{\!hol}(M_{\Ga_1}, L(M_{\Ga_1})) \~
			\GG_\mathrm{\!hol}(\orb_{\Ga_2}, L_{\Ga_2}).
\]

Let us identify in the sequel (for $k = 1,2$) $\widehat{G_k}$, the set of
equivalence classes of irreducible complex representations of finite
dimension of $G_k$, with $(\Ft_k)_\Z^+$ the integral points in a fixed Weyl
chamber $\Ft_k^+$ in the dual $\Ft_k^*$ of a maximal abelian subalgebra
$\Ft_k \SSE \Fg_k$.

The preceding statements can be summarized as follows.
\begin{Thm}
\label{TH:qucorr}
Let $G_1$ and $G_2$ be compact connected Lie groups acting by holomorphic
transformations on a K\"ahler manifold $M$ such that the actions extend to
actions of the respective complexified groups. Suppose that the actions of
$G_1$ and $G_2$ commute and are hamiltonian with admissible equivariant
moment maps $\Phi_1$ and $\Phi_2$. Denote by $L$ the prequantum line bundle
over $M$.

Assume the symplectic Howe condition to be satisfied. Then:
\[
	\GG_\mathrm{\!hol}(M, L) \~
		\complsum_{\Ga_1 \in \Phi_1(M) \cap \widehat{G_1}}
			\GG_\mathrm{\!hol}(\orb_{\Ga_1}, L_{\Ga_1})
			\otimes \GG_\mathrm{\!hol}(\orb_{\Ga_2}, L_{\Ga_2}),
\]
where $\GL$ is the orbit correspondence map and $\orb_{\Ga_2} =
\GL(\orb_{\Ga_1})$.
\end{Thm}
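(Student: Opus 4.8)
The proof of Theorem \ref{TH:qucorr} is essentially an assembly of the results already established in the preceding three sections, so the plan is to collect them in the right order rather than to prove anything substantially new.

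First I would invoke Theorem \ref{TH:QRcomm} (Sjamaar's quantization-reduction commutation result, whose hypotheses are guaranteed here by the standing assumptions of the section: $M$ K\"ahler, $G_i$ compact connected acting holomorphically with admissible moment maps, actions extending to the complexifications) applied to the product group $G_1 \times G_2$ and the product moment map $\Phi = \Phi_1 \oplus \Phi_2$. For an integral point $(\Ga_1, \Ga_2)$ of $\Fg_1^* \oplus \Fg_2^*$ this identifies the space of intertwiners $\HOM_{G_1 \times G_2}(V_{\Ga_1} \otimes V_{\Ga_2}, \GG_\mathrm{\!hol}(M, L))$ with the holomorphic sections of the prequantum bundle over the reduced space $\Phi^{-1}(\orb_{\Ga_1} \times \orb_{\Ga_2})/(G_1 \times G_2)$. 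By Theorem \ref{TH:orbcorr}(iv) this reduced space is a point when $\orb_{\Ga_2} = \GL(\orb_{\Ga_1})$ and empty otherwise; together with the Borel--Weil realization $V_{\Ga_i} \~ \GG_\mathrm{\!hol}(\orb_{\Ga_i}, L_{\Ga_i})$ and the K\"unneth formula $\GG_\mathrm{\!hol}(\orb_{\Ga_1} \times \orb_{\Ga_2}, L_{\Ga_1} \boxtimes L_{\Ga_2}) \~ \GG_\mathrm{\!hol}(\orb_{\Ga_1}, L_{\Ga_1}) \otimes \GG_\mathrm{\!hol}(\orb_{\Ga_2}, L_{\Ga_2})$, this yields $\dim \HOM_{G_1 \times G_2}(V_{\Ga_1} \otimes V_{\Ga_2}, \GG_\mathrm{\!hol}(M, L)) \le 1$, with equality exactly when $\orb_{\Ga_2} = \GL(\orb_{\Ga_1})$. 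This already shows that the $(G_1 \times G_2)$-representation on the quantization is multiplicity-free and that the pair $(\Ga_1, \Ga_2)$ contributes precisely when the orbits correspond. (One must also note, via Proposition \ref{PR:intcorr} and the theorem following Definition \ref{DEF:integrelem}, that integrality is preserved under $\GL$, so the condition ``$\Ga_1 \in \Phi_1(M) \cap \widehat{G_1}$'' automatically forces $\orb_{\Ga_2}$ to be integral and hence $V_{\Ga_2}$ to make sense.)

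To upgrade this multiplicity count to the asserted isomorphism, I would decompose $\GG_\mathrm{\!hol}(M, L)$ as a $G_1$-representation first: by Theorem \ref{TH:QRcomm} applied to $G_1$ alone, the $\Ga_1$-isotypic multiplicity space is $\HOM_{G_1}(V_{\Ga_1}, \GG_\mathrm{\!hol}(M, L)) \~ \GG_\mathrm{\!hol}(M_{\Ga_1}, L(M_{\Ga_1}))$, and this isomorphism is $G_2$-equivariant because the commuting $G_2$-action on $M$ and on $L$ (using the fixed linearization) descends to $M_{\Ga_1}$ and $L(M_{\Ga_1})$. Then Theorem \ref{TH:orbcorr}(iii) provides a $G_2$-equivariant symplectomorphism $\tilde\Phi_2 : M_{\Ga_1} \RA \orb_{\Ga_2}$ covered (by Proposition \ref{PR:intcorr} and the subsequent theorem, i.e.\ the line bundle identification $(\tilde\Phi_2^{-1})^* L(M_{\Ga_1}) = L_{\Ga_2}$) by a $G_2$-equivariant bundle isomorphism; pulling back holomorphic sections gives a $G_2$-equivariant isomorphism $\GG_\mathrm{\!hol}(M_{\Ga_1}, L(M_{\Ga_1})) \~ \GG_\mathrm{\!hol}(\orb_{\Ga_2}, L_{\Ga_2}) \~ V_{\Ga_2}$. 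Summing over $\Ga_1 \in \Phi_1(M) \cap \widehat{G_1}$ (which by Theorem \ref{TH:QRcomm}(Remark, part (iii)) is exactly the set of $G_1$-types actually occurring) and completing the direct sum appropriately — the hat on $\complsum$ signalling the Hilbert-space completion when $\GG_\mathrm{\!hol}(M,L)$ is infinite-dimensional — gives the stated decomposition $\GG_\mathrm{\!hol}(M, L) \~ \complsum_{\Ga_1} \GG_\mathrm{\!hol}(\orb_{\Ga_1}, L_{\Ga_1}) \otimes \GG_\mathrm{\!hol}(\orb_{\Ga_2}, L_{\Ga_2})$.

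The one point requiring genuine care — and the step I expect to be the main obstacle — is the \emph{compatibility of complex and holomorphic structures} under the various identifications: Theorem \ref{TH:orbcorr}(iii) was proved as a statement about symplectomorphisms, and one needs that $\tilde\Phi_2$ is in fact biholomorphic (equivalently, that the K\"ahler-reduced complex structure on $M_{\Ga_1}$ matches the one on the coadjoint orbit $\orb_{\Ga_2}$), so that pulling back holomorphic sections makes sense and is an isomorphism of the relevant function spaces rather than merely of their underlying vector spaces. This follows because K\"ahler reduction is compatible with the GIT/symplectic quotient identification (the reduced space $M_{\Ga_1}$ carries a canonical K\"ahler structure, and $\tilde\Phi_2$, being the moment map of the residual action restricted to a level set and descended, intertwines the reduced complex structures — this is implicit in the framework of \cite{Sj-redmult} that we have already committed to using). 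Once this is granted, everything else is bookkeeping: reconciling the completions on both sides (so that the K\"unneth and Borel--Weil isomorphisms, which are finite-dimensional statements for each fixed $\Ga_1$, patch together over the completed direct sum) and checking that the $G_2$-equivariance is preserved at each stage, both of which are routine.
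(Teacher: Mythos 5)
Your proposal follows essentially the same route as the paper: Sjamaar's quantization--reduction theorem (Theorem \ref{TH:QRcomm}) applied to the product group together with Theorem \ref{TH:orbcorr}(iv) gives the multiplicity bound, Borel--Weil and the K\"unneth formula realize the irreducibles of $G_1\times G_2$, and Theorem \ref{TH:orbcorr}(iii) identifies the $G_1$-multiplicity space $G_2$-equivariantly with $\GG_\mathrm{\!hol}(\orb_{\Ga_2}, L_{\Ga_2})$. The only small discrepancy is that the completion $\complsum$ is taken in the Fr\'echet topology of $\GG_\mathrm{\!hol}(M,L)$ rather than a Hilbert-space completion; your extra attention to the compatibility of the complex structures under $\tilde{\Phi}_2$ addresses a point the paper leaves implicit.
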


\begin{Rem}
\renewcommand{\labelenumi}{(\theenumi)}
\begin{enumerate}
\item In the preceding theorem the symbol $\widehat{\bigoplus}$ of course
	signifies the completion of the algebraic direct sum in the
	Fr\'echet topology of the space of holomorphic sections of $L$.
\item Since $\GL$, viewed as a map from $\Phi_1(M) \cap \widehat{G_1}$ to
	$\widehat{G_2}$, is injective the representation 
	$\GG_\mathrm{\!hol}(M, L)$ of the pair $(G_1, G_2)$ satisfies the
	representation-theoretic Howe condition $(*)$ recalled in the
	introduction.
\item Let $(M, \Go)$ be a compact complex manifold together with an integral
K\"ahler form and $L \RA M$ a prequantizing holomorphic line bundle. Let
furthermore the connected compact Lie groups $G_1$ and $G_2$ act by
holomorphic transformations and in a hamiltonian fashion on $M$. If this
pair of actions satisfies the symplectic Howe condition, the preceding theorem
applies, \ie their induced linear representation on the ``holomorphic
quantization'' $\GG_\mathrm{\!hol}(M, L)$ is equipped with a Howe duality.
\end{enumerate}
\end{Rem}

\section{Examples}
In this section we give some simple but instructive examples of pairs 
of symplectic actions fulfilling the symplectic Howe condition. In each case, we explicitly give
the orbit correspondence map and the Howe duality map
associated to the linear representation of the pair of groups on the
geometric quantization space.

\subsection{$(U(n), U(m))$ on $\Mat(n {\times} m; \C)$}
Let $M = \Mat(n  {\times} m; \C)$ be the $n{\times} m$ matrices with complex entries
equipped with the symplectic structure $\Go(A, B) = \Im \tr (\bar{A}^\Tr B)$
and the natural actions of $G_1=U(n)$ and $G_2=U(m)$: $U(n) \times U(m) \times M \RA
M, ((U, V), z) \MT UzV^{-1}$. Assume that $n \ge m \ge 1$. The (equivariant) 
moment maps corresponding to these actions are given by $\Phi_1^\xi(z) =
-\OH \Im \tr (\xi z \bar{z}^\Tr)$ (for $\xi \in \Fu(n)$) and $\Phi_2^\eta(z)
= \OH \Im \tr (\eta \bar{z}^\Tr z)$ (for $\eta \in \Fu(m)$).
Direct inspection shows that any level set of $\Phi_1$ is exactly one
$U(m)$-orbit. Using standard facts from invariant theory and a theorem of
G.\,Schwarz (see \cite{Schw-invts}, Thm.\,1) one obtaines that the space of pullbacks
of smooth functions on $\Fg_1^*$ under $\Phi_1$ coincides with the algebra
of $G_2$-invariant smooth functions on $M$, and thus by
Lemma~\ref{LEM:centr-invt} with the centralizer of the $\Phi_2$-pullback
of $\Cinf(\Fg_2^*)$ in the Poisson algebra $\Cinf(M)$. The analogous
statements hold if the roles of the two groups are reversed. Therefore,
the symplectic Howe condition is satisfied.\\

In order to state the orbit correspondence of Thm.\,\ref{TH:orbcorr}
explicitly, recall that any $z \in M$ can be written
$z = U\GS V$ for
\[
	\GS = \GS(\Gs_1, \ldots, \Gs_m) =
	\begin{pmatrix}\Gs_1& 0& \cdots& 0\\
			0& \Gs_2& \cdots& 0\\
			\vdots& \vdots& \ddots& \vdots\\
			0& 0& \cdots& \Gs_m\\
			0& 0& \cdots& 0\\
			\vdots& \vdots& \vdots& \vdots
	\end{pmatrix} \in M,
\]
where $\Gs_1 \ge \Gs_2 \ge \ldots \ge \Gs_m \ge 0$ and $U \in U(n), V \in
U(m)$ are determined up to an element of the stabilizer of $\GS$. The set $S = 
\{\Gs = (\Gs_1, \ldots, \Gs_m) \:|\: \Gs_1 > \Gs_2 > \ldots > \Gs_m > 0\}$
parametrizes thus all $U(n)\times U(m)$-orbits of maximal dimension and its closure
$\bar{S}$ all $U(n)\times U(m)$-orbits.  Identifying the adjoint and the coadjoint action
of a unitary group via the usual trace form isomorphism ($\zeta\mapsto 
\tr(\cdot \, \zeta)$) we get $\widetilde{\Phi}_1(z) =\IO{2} z \bar{z}^\Tr$
and $\widetilde{\Phi}_2(z) =-\IO{2} \bar{z}^\Tr z$ fulfilling
$\Phi_1^\xi(z) =\tr (\xi \cdot \widetilde{\Phi}_1(z) )$ and 
$\Phi_2^\eta(z)=\tr (\eta \cdot \widetilde{\Phi}_2(z))$. 
Denoting furthermore the diagonal element of $\Fu(N)$ with entries 
$\I (\Gl_1, \ldots, \Gl_N)$ by $[\Gl_1, \ldots, \Gl_N]$
or simply by $[\Gl]$
we have here the following orbit correspondence map:
\[
\GL : \orb^{U(n)}_{[{1\over 2}\Gs_1^2, \ldots, {1\over 2}\Gs_m^2, 0, \ldots, 0]} \MT
\orb^{U(m)}_{[-{1\over 2}\Gs_1^2, \ldots, -{1\over 2}\Gs_m^2]} \,\, .
\]
Observe that orbits of any dimension are included in this statement
and no restriction to the principal orbit type is necessary.
For the sake of brevity we write $[\Ga,0]$ for 
$[\Ga_1, \ldots, \Ga_m,0, \ldots ,0]$ in the sequel.\\

Now, we may apply Thm.\,\ref{TH:qucorr} and obtain, since the prequantum
line bundle $L$ over $M$ is holomorphically trivial,
\[
\mathrm{Hol}(M)=	\GG_\mathrm{hol}(M, L) = \complsum_{\Ga \in \bar{S}\,\mathrm{integral}}
		\GG_\mathrm{hol}(\orb^{U(n)}_{[\Ga, 0]}, L_{[\Ga, 0]})
		\otimes \GG_\mathrm{hol}(\orb^{U(m)}_{[-\Ga]}, L_{[-\Ga]})
\]
\[		
		\cong \complsum_{\Ga \in \bar{S}\,\mathrm{integral}}
		V_{[\Ga, 0]} \otimes \left(V_{[\Ga]}\right)^*,
\]
where for $\Gl=(\Gl_1, \ldots, \Gl_N)$ (with $\Gl_1,\ldots \Gl_n$ integral and
$\Gl_1 \ge \Gl_2 \ge \ldots \ge \Gl_N$), $V_{[\Gl]}$ is the irreducible 
$U(N)$-representation associated to the orbit through
$\I (\Gl_1, \ldots, \Gl_N)$ either by the orbit method or equivalently by the 
highest-weight procedure. 
The ensuing Howe duality map,  $V_{[\Ga, 0]} \mapsto \left(V_{[\Ga]}\right)^*$,
is then essentially a restatement of the well-known $GL(n)$-$GL(m)$-duality
(compare, \eg Thm.\,5.2.7 in \cite{GoodWall}) on the space of holomorphic
polynomials on $M$.

\subsection{$(G, G)$ on $T^*G$}
Given a Lie group $G$, the left and right action of $G$ on itself of course
lift to its cotangent bundle $T^*G$. Both actions are free and proper, but
this is not in general true for the product. Trivializing $T^*G$ via the
right action, the left resp. right action reads as follows: $(g, \Ga) \MT
(hg, \Ad^*(h)\Ga)$ and $(g, \Ga) \MT (gh^{-1}, \Ga)$, respectively.
Furthermore, the moment maps are then given by $\Phi_1 : (g, \Ga) \MT \Ga$
and $\Phi_2 : (g, \Ga) \MT -\Ad^*(g^{-1})\Ga$ for the left and right action,
respectively. Obviously, the $\Phi_1$-fibres are right-$G$-orbits and the
$\Phi_2$-fibres left-$G$-orbits. For connected $G$ the symplectic Howe
condition can easily be verified using Lemma~\ref{LEM:centr-invt};
the orbit correspondence is $\GL : \orb_\Ga \MT \orb_{-\Ga}$.

Restricting now to the case that $G$ is compact and connected, $T^*G$ can be
identified with the affine K\"ahler manifold $G^\C$ (see, \eg Sect.\,3 in 
\cite{Hall-CMPh184}) and the holomorphic
geometric quantization $\Hol(G^\C)$ has the same $(G\times G)$-finite
vectors as the representation on $L^2(G)$. In this situation,
Thm.\,\ref{TH:qucorr} specializes thus to the Peter-Weyl theorem.

\subsection{$(U(1), U(n))$ on $\PS_n(\C)$}
The standard $U(n{+}1)$-action on $\PS_n(\C)$ preserves the complex structure
and the scaled Fubini-Study K\"ahler form $k\Go^\mathrm{FS}$ ($k \in \N
\backslash \{0\}$), and possesses the
following equivariant moment map (for $\Gz \in \Fu(n{+}1)$):
\[
	\Phi^\Gz([z]) = k\IO{2\pi} \frac{\ANG{z, \Gz \cdot z}}{\ANG{z, z}}.
\]
The injections $U(n) \RA U(n{+}1), A \MT \begin{pmatrix} 1 & 0 \\ 0 & A
\end{pmatrix}$ and $U(1) \RA U(n{+}1), e^{2\pi\I\Gt} \MT \begin{pmatrix}
e^{2\pi\I\Gt} & 0 \\ 0 & \mathbf{1}_n \end{pmatrix}$ yield a pair of
commuting symplectic actions with induced moment maps. A study
of invariant functions on $\PS_n(\C)$ shows that the action of the pair
$(U(1), U(n))$ satisfies the symplectic Howe condition. 

Fixing $\xi_0 = 2\pi\I$ the moment map $\Phi_1$ reads as $\Phi^{\xi_0} \cdot
\xi_0^* \in \Fu(1)^*$ and -identifying $\Fu(1)^*$ with $\R$ via the base
consisting of $\xi_0^*$- one has that $\Phi_1(M) = [-k, 0]$. We also find
that $\Phi_2(M) = U(n)\cdot S$, where $S = \{y \cdot \Gf_{11} \:|\: 0 \le y
\le k\}$ is a global slice, with $\Gf_{11}(X) = \IO{2\pi} X_{11}$ for $X \in
\Fu(n) \SSE \Mat(n {\times} n; \C)$. The orbit correspondence $\GL$ is then given by
$x \MT U(n)\cdot ((x+k)\Gf_{11})$. Integral points in $\Phi_1(M)$ are simply
points $m \in [-k, 0] \cap \Z$ with associated representation
$e^{2\pi\I\Gt} \MT e^{2\pi\I m\Gt}$, whereas in $\Phi_2(M)$ an orbit
$U(n)\cdot (y\Gf_{11})$ with $y \ge 0$ is integral if and only if
$y \in \N$. Obviously, integrality of orbits in the sense of
Def.\,\ref{DEF:integrelem} is preserved by the orbit correspondence map
$\GL$.

For $k > 0$, the
geometric quantization of $(\PS_n(\C), k\Go^\mathrm{FS})$ is given
by the holomorphic sections module of the $k$-th power of the hyperplane
bundle over $\PS_n(\C)$, isomorphic as a $U(n{+}1)$-representation to
$\C_k[z_0, z_1, \ldots, z_n]$, the space of complex homogeneous polynomials
of degree $k$ in $n{+}1$ variables.

As a $(U(1) \times U(n))$-representation this space decomposes as
\[
	\bigoplus_{d=0}^k \C_d[z_0] \otimes \C_{k-d}[z_1, \ldots, z_n],
\]
where for $l \in \N$, $\C_l[z_1, \ldots, z_n]$ is the space of complex
homogeneous polynomials of degree $l$ in $n$ variables, the
$U(n)$-representation associated to $U(n)\cdot (l \Gf_{11}) \in 
\Phi_2(M)/U(n)$, and $\C_d[z_0]$ realizes the $U(1)$-representation on $\C$ given
by the character $e^{2\pi\I\Gt} \MT e^{2\pi\I(-d)\Gt}$ associated to
$-d \in \Phi_1(M) \cap \Z$. The representation-theoretic Howe duality map
of this $(U(1) \times U(n))$-representation, $\C_d[z_0] \MT
\C_{k-d}[z_1, \ldots, z_n]$, is then clearly induced by the orbit
correspondence map.

\section*{Acknowledgements}
We would like to thank Joachim Hilgert for several helpful discussions related to this work. The second
named author would like to thank as well Marcus J. Slupinski for anterior discussions on the notion
of Howe duality.

\addcontentsline{toc}{section}{References}

\end{document}